\newtheorem{thm}{Theorem}[section]
\newtheorem{cor}{Corollary}[section]
\newtheorem{lem}{Lemma}[section]
\newtheorem{prop}{Proposition}[section]
\theoremstyle{definition}
\newtheorem{assum}{Assumption}[section]
\theoremstyle{remark}
\newtheorem{rem}{Remark}[section]
\numberwithin{equation}{section}
\def\ind{{\rm 1\hspace{-0.90ex}1}}
\begin{document}

\title{Weak convergence of delay SDEs with applications to Carath\'eodory approximation}
\author{T. C. Son\thanks{Department of Mathematics, VNU University of Science, Vietnam National University, Hanoi, 334 Nguyen
Trai, Thanh Xuan, Hanoi, 084 Vietnam.} \and N. T. Dung$^\ast$\thanks{Corresponding author. Email: dung@hus.edu.vn}\and N. V. Tan\thanks{Department of Foundation, Academy of Cryptography Techniques, 141 Chien Thang, Thanh Tri, Hanoi, Vietnam} \and T. M. Cuong$^\ast$\and H. T. P. Thao$^\ast$\and P. D. Tung$^\ast$}

\date{September 4, 2021}          
\maketitle
\begin{abstract}
In this paper, we consider a fundamental class of stochastic differential equations with time delays. Our aim is to investigate the weak convergence with respect to delay parameter of the solutions. Based on the techniques of Malliavin calculus, we obtain an explicit estimate for the rate of convergence. An application to the Carath\'eodory approximation scheme of stochastic differential equations is provided as well.
\end{abstract}
\noindent\emph{Keywords:} Delay SDEs, Weak  convergence, Carath\'eodory approximation, Malliavin calculus.\\
{\em 2010 Mathematics Subject Classification:} 65C30, 60H10, 60H07.
\section{Introduction}
It is known that the Carath\'eodory approximation scheme was introduced by Carath\'eodory in the early part of 20th century for deterministic differential equations, see e.g. \cite{C.L}. In the context of stochastic equations, the first Carath\'eodory approximation results were obtained by Bell and Mohammed \cite{Bell1989} (also see Section 2.6 in \cite{Mao2008} for a general formulation). Let $x_0\in \mathbb{R},n\geq 1$ and $(B(t))_{t\in [0,T]}$ be a standard Brownian motion. We consider stochastic differential equation (SDE)
\begin{equation}\label{cara01}
x(t)=x_0+\int_0^t b(s,x(s))ds+\int_0^t \sigma(s,x(s))dB(s),\,\,\,t\in[0,T]
\end{equation}
and its Carath\'eodory approximation
\begin{equation}\label{cara02}
\left\{
  \begin{array}{l}
x^n(t)=x_0+\int_0^t b(s,x^n(s-1/n))ds+\int_0^t \sigma(s,x^n(s-1/n))dB(s),\,\,\,t\in[0,T]\\
\\
x^n(t)=x_0,\,\,\,t\in [-1/n,0].
  \end{array}
\right.
\end{equation}
It was proved in \cite{Bell1989,Mao2008} that, when the coefficients are Lipschitz and have linear growth, $x^n(t)$ strongly converges to $x(t)$ as $n\to\infty.$ Moreover, the following estimate for the strong  rate of convergence holds
\begin{equation}\label{cara03}
E\left[\sup\limits_{0\leq t\leq T}|x^n(t)-x(t)|^2\right]\leq \frac{C}{n},\,\,\,n\geq 1,
\end{equation}
where $C$ is a positive constant not depending on $n$

As discussed in Section 2.6 of \cite{Mao2008}, the advantage of Carath\'eodory approximation is that we do not need to compute $x^1(t),\cdots,x^{n-1}(t)$ but compute $x^n(t)$ directly (this reduces a lot of calculations on stochastic integrals). In addition, the Carath\'eodory approximation also works well for SDEs with non-Lipschitz coefficients. In the last decades, the Carath\'eodory approximation has been considered for various stochastic differential equations. Among others, we mention Turo \cite{Turo} for stochastic functional differential equations,  Mao \cite{Mao1,Mao2} and Liu \cite{Liu} for semilinear stochastic evolution equations with time delays, Ferrante \& Rovira \cite{Ferrante2010} for delay differential equations driven by fractional Brownian motion, Faizullah \cite{Fai} for SDEs under G-Brownian motion, Benabdallah \& Bourza \cite{BeBo} for perturbed SDEs with reflecting boundary, Mao et al. \cite{WMao2018} for doubly perturbed SDEs, etc.

It is also known that the weak convergence rate of numerical approximations is very useful in practical applications (see, e.g. \cite{Bally1996} for a short discussion). In fact, for certain numerical schemes such as Milstein scheme, Runge-Kutta scheme and Euler-Maruyama scheme, etc. many weak convergence results can be found in the literature, see e.g. \cite{Bally1996,Buckwar2008,Jourdain2011,Kloeden1992}. However, to the best of our knowledge, the weak convergence results of  the Carath\'eodory approximation are scarce even for the system (\ref{cara01})-(\ref{cara02}). Motivated by this observation, our aim is to partially fill up this gap.

In this paper, to make the problem more interesting, we consider delay stochastic differential equations of the form
\begin{equation}\label{cara04}
\left\{
  \begin{array}{l}
X_{\tau_1}(t)=\varphi(0)+\int_0^t b(s,X_{\tau_1}(s),X_{\tau_1}(s-\tau_1))ds+\int_0^t\sigma(s,X_{\tau_1}(s),X_{\tau_1}(s-\tau_1))dB(s),\,t\in[0,T]\\
\\
X_{\tau_1}(t)=\varphi(t),\,\,\,t\in [-\tau_1,0]
  \end{array}
\right.
\end{equation}
and
\begin{equation}\label{cara05}
\left\{
  \begin{array}{l}
X_{\tau_2}(t)=\varphi(0)+\int_0^t b(s,X_{\tau_2}(s),X_{\tau_2}(s-\tau_2))ds+\int_0^t\sigma(s,X_{\tau_2}(s),X_{\tau_2}(s-\tau_2))dB(s),\,t\in[0,T]\\
\\
X_{\tau_2}(t)=\varphi(t),\,\,\,t\in [-\tau_2,0],
  \end{array}
\right.
\end{equation}
where $0\leq \tau_1, \tau_2<\infty$ and $\varphi: (-\infty, 0]\to \mathbb{R}$ is a bounded deterministic function. Our aim is to study the weak convergence of $X_{\tau_2}(t)$ to $X_{\tau_1}(t)$ as $\tau_2\to\tau_1.$ More specifically, we will employ the techniques of Malliavin calculus to provide a quantitative estimate for the quantity
$$|Eg(X_{\tau_{2}}(t))-Eg(X_{\tau_{1}}(t))|,$$
where $g$ is a bounded and measurable function, see Theorem \ref{them2}. Our method is different from the existing ones in the literature and based on a new result established recently in \cite{Dung2020}. The restatement of this new result and further comments  will be given in Lemma \ref{lm2.1} and Remark \ref{uifk5}.

Our results applied to the Carath\'eodory approximation system (\ref{cara01})-(\ref{cara02}) yield
\begin{equation}\label{cara03a}
\sup\limits_{0\leq t\leq T}|Eg(x^n(t))-Eg(x(t))|\leq \frac{C}{\sqrt{n}},\,\,\,n\geq 1.
\end{equation}
It should be noted that, when the test function $g$ is Lipschitz continuous, the strong rate (\ref{cara03}) implies the weak rate (\ref{cara03a}). However, for bounded and measurable test functions, there is no such implication and hence, the novelty of our results lies in the fact that (\ref{cara03a}) holds true for any bounded and measurable test function. The price to pay is that, besides Lipschitz and linear growth conditions, we have to impose additional assumptions on the coefficients, see conditions $(ii)$-$(iii)$ of Corollary \ref{kgld1}.

The rest of this article is organized as follows. In Section 2, we recall some concepts of Malliavin calculus and a general result obtained in our recent work \cite{Dung2020}. Our main results are then stated and proved in Section 3. The conclusion and some remarks are given in Section 4.
\section{Preliminaries}
As we have said in the Introduction, this paper is based on techniques of Malliavin calculus. For the reader's convenience, let us recall some elements of Malliavin calculus (for more details see \cite{nualartm2}). We suppose that $(B(t))_{t\in [0,T]}$ is defined on a complete probability space $(\Omega,\mathcal{F},\mathbb{F},P)$, where $\mathbb{F}=(\mathcal{F}_t)_{t\in [0,T]}$ is a natural filtration generated by the Brownian motion $B.$ For $h\in L^2[0,T],$ we denote by $B(h)$ the Wiener integral
	$$B_h=\int_0^T h(t)dB(t).$$
	Let $\mathcal{S}$ denote the dense subset of $L^2(\Omega, \mathcal{F},P)$ consisting of smooth random variables of the form
	\begin{equation}\label{ro}
	F=f(B_{h_1},...,B_{h_n}),
	\end{equation}
	where $n\in \mathbb{N},h_1,...,h_n\in L^2[0,T]$ and $f$ is an infinitely
differentiable function such that together with all its partial derivatives
has at most polynomial growth order.  If $F$ has the form (\ref{ro}), we define its Malliavin derivative as the process $DF:=\{D_tF, t\in [0,T]\}$ given by
	$$D_tF=\sum\limits_{k=1}^n \frac{\partial f}{\partial x_k}(B_{h_1},...,B_{h_n}) h_k(t).$$
More generally, for each $k\geq 1,$ we can define the iterated derivative operator  by setting $D^{k}_{t_1,...,t_k}F=D_{t_1}...D_{t_k}F.$ For any $p,k\geq 1,$ we shall denote by $\mathbb{D}^{k,p}$ 
the closure of $\mathcal{S}$ with respect to the norm
$$\|F\|^p_{k,p}:=E|F|^p+E\bigg[\int_0^T|D_{t_1}F|^pdt_1\bigg]+...+E\bigg[\int_0^T...\int_0^T|D^{k}_{t_1,...,t_k}F|^pdt_1...dt_k\bigg].$$
A random variable $F$ is said to be Malliavin differentiable if it belongs to $\mathbb{D}^{1,2}.$ For the convenience of the reader, we recall that  the derivative
	operator $D$ satisfy the chain rule, i.e.
\begin{equation}\label{haha1}
D\varphi(F)=\varphi'(F) DF.
\end{equation}
 Moreover, we have the following relations between Malliavin derivative and the integrals
$$D_r\left(\int_0^Tu(s)ds\right)=\int_r^TD_ru(s)ds,$$
$$D_r\left(\int_0^Tu(s)dB(s)\right)=u_r+\int_r^TD_ru(s)dB(s)$$
for all $0\leq r\leq T,$ where $(u(t))_{t\in [0,T]}$ is an $\mathbb{F}$-adapted and Malliavin differentiable stochastic process.

An important operator in the Malliavin's calculus theory is the divergence operator $\delta$, it is the adjoint of the derivative
	operator $D$. The domain of $\delta$ is the set of all functions $u \in L^{2}(\Omega, L^{2}[0, T])$ such that for all $F\in \mathbb{D}^{1,2}$ it holds that
	$$E|\langle DF, u \rangle_{L^{2}[0,T]}|\leq C(u)\|F\|_{L^{2}[0,T]},$$
	where $C(u)$ is some positive constant depending on $u$. In particular, if $u \in dom\delta$, then $\delta(u)$ is characterized by following duality relationship
\begin{equation}\label{hha}
E\langle DF, u\rangle_{L^{2}[0,T]}=E[F\delta(u)] \text{ for any } F\in \mathbb{D}^{1,2}.
\end{equation}
Let $\mathcal{B}$ denote the space of measurable functions $g:\mathbb{R}\to \mathbb{R}$ such that $\|g\|_\infty:=\sup\limits_{x\in \mathbb{R}}|g(x)|\leq 1.$ We have the following.
\begin{lem}\label{lm2.1}
Let $F_1 \in \mathbb{D}^{2,2}$ satisfy that  $\|DF_1\|_{L^2[0,T]}>0\,\,a.s.$ Then, for any random variable $F_2\in \mathbb{D}^{1,2}$ and any $g\in \mathcal{B},$  we have
\begin{align}
&|Eg(F_1)-Eg(F_2)|\notag\\
&\leq C \left(E\|DF_1\|^{-8}_{L^2[0,T]}E\left(\int_0^T\int_0^T|D_\theta D_rF_1|^2d\theta dr\right)^2+(E\|DF_1\|^{-2}_{L^2[0,T]})^2\right)^{\frac{1}{4}}\|F_1-F_2\|_{1,2},\notag
\end{align}
provided that the expectations exist, where $C$ is an absolute constant.
\end{lem}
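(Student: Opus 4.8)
The plan is to estimate $|Eg(F_1)-Eg(F_2)|$ by first smoothing the bounded measurable function $g$ and then applying an integration-by-parts formula in the Malliavin sense to transfer the derivative off $g$. Concretely, I would introduce a mollification $g_\varepsilon = g * \rho_\varepsilon$ of $g$, so that $g_\varepsilon$ is smooth with $\|g_\varepsilon\|_\infty \le \|g\|_\infty \le 1$, and $g_\varepsilon \to g$ pointwise (hence $Eg_\varepsilon(F_i)\to Eg(F_i)$ by dominated convergence, using that the law of $F_1$ has no atoms, which follows from $\|DF_1\|_{L^2[0,T]}>0$ a.s. via the classical Bouleau–Hirsch criterion). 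Thus it suffices to bound $|Eg_\varepsilon(F_1)-Eg_\varepsilon(F_2)|$ uniformly in $\varepsilon$ and then pass to the limit.

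Next I would write $g_\varepsilon(F_1)-g_\varepsilon(F_2) = \int_0^1 g_\varepsilon'(F_2+\lambda(F_1-F_2))(F_1-F_2)\,d\lambda$ — no, that still leaves a derivative of $g_\varepsilon$, which we cannot control as $\varepsilon\to 0$. Instead, the right move is: since $g_\varepsilon' = (G_\varepsilon)'$ where $G_\varepsilon$ is a primitive, we integrate by parts to remove the derivative entirely. Write $Eg_\varepsilon(F_1) - Eg_\varepsilon(F_2)$ and use the Malliavin integration-by-parts formula (\ref{hha}): for a smooth $\phi$,
$$
E[\phi'(F_1)\langle DF_1, u\rangle_{L^2[0,T]}] = E[\phi(F_1)\,\delta(u)]
$$
with the natural choice $u = DF_1/\|DF_1\|^2_{L^2[0,T]}$, so that $\langle DF_1,u\rangle = 1$ and $E[\phi'(F_1)] = E[\phi(F_1)\delta(u)]$. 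Applying the fundamental theorem of calculus along the segment from $F_2$ to $F_1$ and this identity, one represents the difference as
$$
Eg_\varepsilon(F_1)-Eg_\varepsilon(F_2) = E\!\left[(F_1-F_2)\int_0^1 g_\varepsilon\big(F_2+\lambda(F_1-F_2)\big)\,\Lambda_\lambda\,d\lambda\right],
$$
where $\Lambda_\lambda$ is a weight built from $\delta\big(DF_1/\|DF_1\|^2\big)$ and the Malliavin derivative of the interpolation; crucially $g_\varepsilon$ now appears \emph{undifferentiated}, so $\|g_\varepsilon\|_\infty\le 1$ is all we need and the limit $\varepsilon\to 0$ is harmless. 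Then by Cauchy–Schwarz, $|Eg(F_1)-Eg(F_2)| \le \|F_1-F_2\|_{L^2(\Omega)}\cdot \big\|\int_0^1 g(\cdots)\Lambda_\lambda d\lambda\big\|_{L^2(\Omega)} \le \|F_1-F_2\|_{1,2}\cdot \big(E|\Lambda|^2\big)^{1/2}$ up to constants, and the remaining task is to bound $E|\Lambda|^2$ by the bracketed quantity raised to the power $\tfrac14$.

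The estimate of the weight is where the real work lies. Using $\delta(u) = \|DF_1\|^{-2}\delta(DF_1) - \langle D\|DF_1\|^{-2}, DF_1\rangle$ and $D\|DF_1\|^{-2}_{L^2[0,T]} = -2\|DF_1\|^{-4}\int_0^T D_\bullet D_r F_1\, D_rF_1\,dr$, one sees that $|\delta(u)|$ is controlled by a combination of $\|DF_1\|^{-2}|\delta(DF_1)|$ and $\|DF_1\|^{-3}\big(\int_0^T\int_0^T|D_\theta D_rF_1|^2 d\theta dr\big)^{1/2}$. Taking $L^2(\Omega)$ norms and applying Cauchy–Schwarz in $\Omega$ to separate the negative moments of $\|DF_1\|$ from the second-derivative term produces exactly factors like $\big(E\|DF_1\|^{-8}\big)^{1/2}$ paired with $\big(E(\int\int |D_\theta D_r F_1|^2)^2\big)^{1/2}$ under an overall fourth root, plus a term $\big(E\|DF_1\|^{-2}\big)^2$ coming from the contribution that does not see the second derivative; one also uses the Meyer inequality $\|\delta(DF_1)\|_{L^2}\lesssim \|F_1\|_{2,2}$, or absorbs $\delta(DF_1)$ into the second-derivative term via its definition. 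I expect the main obstacle to be the careful bookkeeping here: tracking which Hölder exponents on $\Omega$ yield precisely the exponents $-8$, the square of the double integral, and the $(E\|DF_1\|^{-2})^2$ term under a single $\tfrac14$-power, and justifying that all the manipulations are legitimate (i.e., $u\in\mathrm{dom}\,\delta$, the interpolation argument is valid, and every expectation that appears is the one assumed finite in the hypothesis). This is essentially a repackaging of the general result from \cite{Dung2020} referenced in the statement, so I would cite that work for the sharp constants and focus the written proof on the reduction via mollification and the integration-by-parts representation.
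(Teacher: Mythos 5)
Your overall strategy --- mollify $g$, integrate by parts with the weight $u=DF_1/\|DF_1\|^2_{L^2[0,T]}$, and control $E[\delta(u)^2]$ by $E\|DF_1\|^{-2}+\int_0^T\int_0^T E|D_\theta u_r|^2\,d\theta\,dr$ --- is exactly the paper's, and your accounting of how $(E\|DF_1\|^{-8})^{1/2}$ gets paired with $(E(\int\int|D_\theta D_rF_1|^2\,d\theta\,dr)^2)^{1/2}$ is the right bookkeeping. But the central representation you propose does not go through under the stated hypotheses. Writing $g_\varepsilon(F_1)-g_\varepsilon(F_2)=(F_1-F_2)\int_0^1 g_\varepsilon'(F_\lambda)\,d\lambda$ with $F_\lambda=F_2+\lambda(F_1-F_2)$ and then removing the derivative from $g_\varepsilon$ requires an integration by parts \emph{at the point} $F_\lambda$: the chain rule gives $D[G_\varepsilon(F_\lambda)]=g_\varepsilon(F_\lambda)DF_\lambda$, so pairing with your $u$ produces the factor $\langle DF_\lambda,DF_1\rangle/\|DF_1\|^2$ rather than $1$, and to isolate $g_\varepsilon(F_\lambda)$ you would have to divide by $\langle DF_\lambda,DF_1\rangle$ (equivalently, use a weight normalized against $DF_\lambda$). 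Nothing in the hypotheses controls this: $F_2$ is only assumed to lie in $\mathbb{D}^{1,2}$, so $DF_\lambda=DF_2+\lambda(DF_1-DF_2)$ may vanish, or be nearly orthogonal to $DF_1$, on a set of positive probability, and the negative moments of $\langle DF_\lambda,DF_1\rangle$ that your weight $\Lambda_\lambda$ would require are not finite in general and in any case do not appear in the claimed bound. This is precisely the obstruction the paper's argument is built to avoid.

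The fix is to forgo the interpolation and work with the primitive evaluated only at the endpoints. Set $\psi(y)=\int_{-\infty}^y g(z)\,dz$; the chain rule gives
\begin{equation*}
\Bigl\langle D\int_{F_2}^{F_1}g(z)\,dz,\;DF_1\Bigr\rangle=\bigl(g(F_1)-g(F_2)\bigr)\,\|DF_1\|^2+g(F_2)\,\langle DF_1-DF_2,\,DF_1\rangle ,
\end{equation*}
so dividing by $\|DF_1\|^2$ uses only the nondegeneracy of $F_1$. Taking expectations and applying the duality (\ref{hha}) with your $u$ turns the first term into $E[\int_{F_2}^{F_1}g(z)\,dz\;\delta(u)]$, which is bounded by $(E|F_1-F_2|^2)^{1/2}(E[\delta(u)^2])^{1/2}$ because $|\int_{F_2}^{F_1}g(z)\,dz|\le|F_1-F_2|$; the second term is bounded by Cauchy--Schwarz by $(E\|DF_1-DF_2\|^2)^{1/2}(E\|DF_1\|^{-2})^{1/2}$, which is exactly where the $(E\|DF_1\|^{-2})^2$ summand and the full $\|F_1-F_2\|_{1,2}$ norm enter. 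With that substitution your estimate of $E[\delta(u)^2]$ finishes the proof; no Meyer inequality or separate control of $\delta(DF_1)$ is needed.
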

\begin{proof} This lemma comes from Theorem 3.1 in our recent paper \cite{Dung2020}. Here, for the reader's convenience, we recall its proof. We write $\langle . , . \rangle$  instead of $\langle . , . \rangle_{L^2[0,T]}$ and $\|.\|$ instead of $\|.\|_{L^2[0,T]}.$ By the routine approximation argument, we can assume that $g$ is continuous. Indeed, for example, we can approximate $g$ by bounded and Lipschitz continuous functions defined by
$$g_\varepsilon(x)=\int_{-\infty}^\infty \ind_{\{|y|<\frac{1}{\varepsilon}\}}g(y)\rho_\varepsilon(x-y)dy,\,\,\varepsilon>0,$$
where $\rho_\varepsilon$ is the standard mollifier:  $\rho_\varepsilon(x)=\frac{\rho(x/\varepsilon)}{\varepsilon},$ where $\rho(x)=C\ind_{\{|x|<1\}}e^{\frac{1}{x^2-1}}$ and $C$ is a constant such that $\int_{-\infty}^\infty\rho(x)dx=1.$

The function $\psi(y):=\int_{-\infty}^yg(z)dz$ is differentiable with bounded derivative. Hence, $\psi(F_i)\in\mathbb{D}^{1,2}$ for $i=1,2.$ We obtain
$$\langle D\psi(F_i) , DF_1 \rangle=g(F_i) \langle DF_i , DF_1 \rangle, \,\,\, i=1,2,$$
or equivalently
$$\langle D\int_{-\infty}^{F_i} g(z)dz,DF_1  \rangle=g(F_i) \langle DF_i , DF_1  \rangle,\,\,\, i=1,2.$$
As a consequence, we get
\begin{align*}
\langle &D\int_{F_2}^{F_1} g(z)dz, DF_1 \rangle=g(F_1) \langle DF_1, DF_1 \rangle -g(F_2) \langle DF_2 , DF_1 \rangle
\\& =\left(g(F_1)-g(F_2)\right) \langle DF_1 , DF_1 \rangle +g(F_2) \langle DF_1-DF_2 , DF_1 \rangle,
\end{align*}
and hence,
$$g(F_1)-g(F_2)=\dfrac{\langle D\int_{F_2}^{F_1} g(z)dz, DF_1 \rangle}{\|DF_1\|^2}-\dfrac{g(F_2) \langle DF_1-DF_2 , DF_1\rangle}{\|DF_1\|^2}.$$
Taking the expectation yields
\begin{align}
Eg(F_1)-Eg(F_2)&=E\left[\dfrac{\langle D\int_{F_2}^{F_1} g(z)dz, DF_1 \rangle}{\|DF_1\|^2}\right]-E\left[\dfrac{g(F_2) \langle DF_1-DF_2 , DF_1 \rangle}{\|DF_1\|^2}\right].\label{oldl1}
\end{align}
 Now we consider the stochastic process $$u_r:=\dfrac{D_rF_1 }{\|DF_1\|^2}, \ \ 0\leq r\leq T$$
and use Proposition 1.3.1 in \cite{nualartm2} to get
\begin{align}
E[\delta(u)^2]&\leq \int_0^TE|u_r|^2dr+\int_0^T\int_0^TE|D_\theta u_r|^2d\theta dr\notag\\
&=E\|D F_1\|^{-2}+\int_0^T\int_0^TE|D_\theta u_r|^2d\theta dr\label{uu2}
\end{align}
 By the chain rule (\ref{haha1}) for Malliavin derivatives, we have
 $$D_\theta u_r=\dfrac{D_\theta D_rF_1}{\|DF_1\|^2}-2\dfrac{D_rF_1\langle D_\theta DF_1,DF_1\rangle}{\|DF_1\|^4}, \ \ 0\leq\theta, r\leq T.$$
By  the Cauchy-Schwarz inequality, we obtain
  \begin{align}
\int_0^T\int_0^TE|D_\theta u_r|^2d\theta dr
 &\leq2E\left[\dfrac{\int_0^T\int_0^T|D_\theta D_rF_1|^2d\theta dr}{\|DF_1\|^4}\right]+8E\int_0^T\int_0^T\dfrac{|D_rF_1|^2\|D_\theta DF_1\|^2}{\|DF_1\|^6}d\theta dr\notag\\
 &=2E\left[\dfrac{\int_0^T\int_0^T|D_\theta D_rF_1|^2d\theta dr}{\|DF_1\|^4}\right]+8E\left[\dfrac{\int_0^T\int_0^T|D_\theta D_rF_1|^2d\theta dr}{\|DF_1\|^4}\right]\notag\\
 & \leq 10\left(E\left[\left(\int_0^T\int_0^T|D_\theta D_rF_1|^2d\theta dr\right)^2\right]\right)^{\frac{1}{2}}\left(E\left[\|DF_1\|^{-8}\right]\right)^{\frac{1}{2}}.\label{uu3}
 \end{align}
Inserting (\ref{uu3}) into (\ref{uu2}) yields
\begin{equation}\label{uifk3}
E[\delta(u)^2]\leq E\|D F_1\|^{-2}+10\left(E\left[\left(\int_0^T\int_0^T|D_\theta D_rF_1|^2d\theta dr\right)^2\right]\right)^{\frac{1}{2}}\left(E\left[\|DF_1\|^{-8}\right]\right)^{\frac{1}{2}}.
\end{equation}
By using the relation (\ref{hha}),  the Cauchy-Schwarz  inequality and $\|g\|_\infty\leq 1,$ it follows from (\ref{oldl1}) that
\begin{align}
&|Eg(F_1)-Eg(F_2)|\leq E\left|\int_{F_2}^{F_1} g(z)dz\delta\left(u\right)\right|+E\left|\dfrac{g(F_2) \langle DF_1-DF_2 , DF_1 \rangle}{\|DF_1\|^2}\right|\notag\\
&\leq\left(E|F_1-F_2|^2\right)^{\frac{1}{2}} \left(E[\delta\left(u\right)^2]\right)^{\frac{1}{2}}+E\left|\dfrac{ \langle DF_1-DF_2 , DF_1 \rangle}{\|DF_1\|^2}\right|\notag\\
&\leq\left(E|F_1-F_2|^2\right)^{\frac{1}{2}} \left(E[\delta\left(u\right)^2]\right)^{\frac{1}{2}}+E\left[\dfrac{\| DF_1-DF_2\|}{\|DF_1\|}\right]\notag\\
&\leq\left(E|F_1-F_2|^2\right)^{\frac{1}{2}} \left(E[\delta\left(u\right)^2]\right)^{\frac{1}{2}}+\left(E\|DF_1-DF_2\|^2\right)^{\frac{1}{2}} \left(E\|DF_1\|^{-2}\right)^{\frac{1}{2}}.\label{uu1}
\end{align}
So we can obtain the desired conclusion by inserting (\ref{uifk3}) into (\ref{uu1}) and then using Cauchy-Schwarz inequality.
\end{proof}
\begin{rem}\label{uifk5} Lemma \ref{lm2.1} shows that the weak convergence can be controlled by the norm $\|.\|_{1,2}$ in the space of Malliavin differentiable random variables. In its statement, we do not require any special structure of $F_1$ and $F_2.$ We therefore believe that Lemma \ref{lm2.1} can be applied to other numerical schemes such as Euler-Maruyama and Runge-Kutta schemes. In the present paper, the main reason for choosing Carath\'eodory scheme is due to the lack of weak convergence results for this scheme in the literature. It is also worth mentioning that the proof of Lemma \ref{lm2.1} heavily relies on dimension one. The generalization to higher dimensions will be a difficult and interesting problem.
\end{rem}

\section{The main results}
In the whole paper, we are going to impose the following assumptions:
\begin{assum}\label{assum1} The functions $b,\sigma:[0,T]\times \mathbb{R}^2\to \mathbb{R}$ are Lipschitz and have linear growth, i.e. there exists $L>0$ such that
	$$|b(t,x_1,y_1)-b(t,x_2,y_2)|+|\sigma(t,x_1,y_1)-\sigma(t,x_2,y_2)|\leq L(|x_1-x_2|+|y_1-y_2|)$$
for all $t\in [0,T],x_1,x_2,y_1,y_2\in \mathbb{R}$ and
	$$|b(t,x,y)|+|\sigma(t,x,y)|\leq L(|x|+|y|+1),\,\,\forall\,t\in [0,T],x,y\in \mathbb{R}.$$
\end{assum}
\begin{assum}\label{assum5}
For every $t\in [0,T],$ $b(t,.,.), \sigma(t,.,.)$ are twice differentiable functions with the partial derivatives bounded by $L$.
\end{assum}

Hereafter, we denote by $C$ (with or without an index) a generic constant which may vary at each appearance. For any function $h$ of $n$ variables, we denote
$$h'_i(x_1,\cdots,x_n):=\frac{\partial h}{\partial x_i}(x_1,\cdots,x_n),\quad h''_{ij}(x_1,\cdots,x_n):=\frac{\partial h}{\partial x_i\partial x_j}(x_1,\cdots,x_n).$$
For any $a,b\in \mathbb{R},$ we denote $a\vee b=\max\{a,b\}.$ In the proofs, we frequently use the fundamental inequality $$(a_1+\cdots+a_n)^p\leq n^{p-1}(a_1^p+\cdots+a_n^p)$$ for all $a_1,...,a_n\geq 0$ and $p\geq 2.$

\subsection{Malliavin derivative of delay SDEs}
In this Subsection, given $\tau\geq 0,$ we investigate Malliavin derivatives of the solution to the following delay SDE
\begin{equation}\label{eq2}
\left\{
\begin{array}{l}
		X_{\tau}(t)=\varphi(0)+\int_0^t b(s,X_{\tau}(s),X_{\tau}(s-\tau))ds+\int_0^t\sigma(s,X_{\tau}(s),X_{\tau}(s-\tau))dB(s),\,\,\,t\in [0,T],\\
\\
		X_{\tau}(t) = \varphi(t), \quad t\in [-\tau, 0].
\end{array}\right.
\end{equation}
For the Malliavin differentiability of the solutions, we have the following.
\begin{prop}\label{prop1}
Let  Assumption \ref{assum1} hold. Then, the equation (\ref{eq2})  has a unique solution $(X_{\tau}(t))_{t\in[-\tau,T]}$ and this solution is Malliavin differentiable. Moreover, the derivative $D_{\theta}X_{\tau}(t)$ satisfies

\noindent (i) When $t\in [-\tau,0],$ $D_{\theta}X_{\tau}(t) = 0$  for all $0\leq \theta \leq T,$

\noindent (ii) When $t\in (0,T],$  $D_{\theta}X_{\tau}(t) = 0$  for $\theta >t$ and
	\begin{align}
		D_\theta X_{\tau}(t)&=\sigma(\theta,X_{\tau}(\theta),X_{\tau}(\theta-\tau))
		+\int_{\theta}^{t} \bar{b}^{\tau}_{2}(s)D_\theta X_{\tau}(s)ds+\int_{\theta+\tau}^{t} \bar{b}^{\tau}_{3}(s)D_\theta X_{\tau}(s-\tau)ds\nonumber\\
		&+\int_{\theta}^{t}\bar{\sigma}^{\tau}_{2}(s)D_\theta X_{\tau}(s)dB(s)+\int_{\theta+\tau}^{t}\bar{\sigma}^{\tau}_{3}(s)D_\theta X_{\tau}(s-\tau)dB(s),\,\,\,0\leq \theta\leq t-\tau,\label{mall1}
	\end{align}
	\begin{align}
		D_\theta X_{\tau}(t)=\sigma(\theta,X_{\tau}(\theta),X_{\tau}(\theta-\tau))
		+\int_{\theta}^{t} \bar{b}^{\tau}_{2}(s)D_\theta X_{\tau}(s)ds+\int_{\theta}^{t}\bar{\sigma}^{\tau}_{2}(s)D_\theta X_{\tau}(s)dB(s),\,\,\,(t-\tau)\vee 0< \theta\leq t,\label{mall2}
	\end{align}
	where  $\bar{b}^{\tau}_{2}(s)$, $\bar{b}^{\tau}_{3}(s)$, $\bar{\sigma}^{\tau}_{2}(s)$, $\bar{\sigma}^{\tau}_{3}(s)$ are adapted stochastic processes and bounded by the Lipschitz constant $L.$ Here, we use the convention $[0,t-\tau]=\emptyset$ if $t<\tau.$
\end{prop}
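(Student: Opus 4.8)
The plan is to combine the classical Picard approximation with the closability of the Malliavin derivative operator, reading off \eqref{mall1}--\eqref{mall2} at the very end by applying $D_\theta$ to the integral equation \eqref{eq2}. Existence, uniqueness and moment bounds come first: under Assumption \ref{assum1} the coefficients are globally Lipschitz with linear growth, so \eqref{eq2} can be solved by the method of steps --- on $[0,\tau]$ the delayed argument is the known deterministic function $\varphi(\cdot-\tau)$, so \eqref{eq2} becomes a standard It\^o SDE with Lipschitz coefficients that is uniquely solvable there; iterating over $[\tau,2\tau],[2\tau,3\tau],\dots$ (or solving \eqref{eq2} directly when $\tau=0$) yields the unique solution on $[-\tau,T]$, and a routine Burkholder--Davis--Gundy and Gronwall argument gives $E\sup_{-\tau\le t\le T}|X_\tau(t)|^2<\infty$.

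For Malliavin differentiability I would work with the Picard iterates $X^{(m)}_\tau$, where $X^{(0)}_\tau=\varphi$ on $[-\tau,0]$, $X^{(0)}_\tau\equiv\varphi(0)$ on $[0,T]$, and
\begin{equation*}
X^{(m+1)}_\tau(t)=\varphi(0)+\int_0^tb(s,X^{(m)}_\tau(s),X^{(m)}_\tau(s-\tau))\,ds+\int_0^t\sigma(s,X^{(m)}_\tau(s),X^{(m)}_\tau(s-\tau))\,dB(s).
\end{equation*}
By induction on $m$, using the chain rule for Lipschitz functions of Malliavin differentiable random variables \cite{nualartm2} together with the commutation rules between $D$ and the Lebesgue and It\^o integrals recalled in Section 2, each $X^{(m)}_\tau(t)$ lies in $\mathbb{D}^{1,2}$, with $DX^{(m+1)}_\tau$ satisfying a linear equation whose coefficients are adapted and bounded by $L$. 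Taking $L^2(\Omega\times[0,T])$-norms, applying Burkholder--Davis--Gundy and a Gronwall inequality in which the delayed term $E|D_\theta X^{(m)}_\tau(s-\tau)|^2$ is majorised by $\sup_{\theta\le u\le s}E|D_\theta X^{(m)}_\tau(u)|^2$, I expect a bound on $\sup_m\sup_{0\le t\le T}E\|DX^{(m)}_\tau(t)\|^2_{L^2[0,T]}$ depending only on $L$, $T$ and $\|\varphi\|_\infty$. Since $X^{(m)}_\tau(t)\to X_\tau(t)$ in $L^2(\Omega)$ while the derivatives remain bounded in $L^2(\Omega\times[0,T])$, the closability of $D$ gives $X_\tau(t)\in\mathbb{D}^{1,2}$ and the weak convergence $DX^{(m)}_\tau(t)\to DX_\tau(t)$ in $L^2(\Omega\times[0,T])$.

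It then remains to identify the equation for the derivative. For $t\in[-\tau,0]$, $X_\tau(t)=\varphi(t)$ is deterministic, so $D_\theta X_\tau(t)=0$; for $t\in(0,T]$, $X_\tau(t)$ is $\mathcal{F}_t$-measurable, so $D_\theta X_\tau(t)=0$ for $\theta>t$. For $0\le\theta\le t$, applying $D_\theta$ to \eqref{eq2} and using the commutation rules gives
\begin{equation*}
D_\theta X_\tau(t)=\sigma(\theta,X_\tau(\theta),X_\tau(\theta-\tau))+\int_\theta^tD_\theta b(s,X_\tau(s),X_\tau(s-\tau))\,ds+\int_\theta^tD_\theta\sigma(s,X_\tau(s),X_\tau(s-\tau))\,dB(s),
\end{equation*}
while the chain rule writes $D_\theta b(s,X_\tau(s),X_\tau(s-\tau))=\bar b^\tau_2(s)D_\theta X_\tau(s)+\bar b^\tau_3(s)D_\theta X_\tau(s-\tau)$, and analogously for $\sigma$, with adapted processes $\bar b^\tau_i,\bar\sigma^\tau_i$ bounded by $L$. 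Since $D_\theta X_\tau(s-\tau)=0$ unless $s>\theta+\tau$ (the requirement $s>\tau$ being then automatic), splitting into the case $0\le\theta\le t-\tau$, where the delayed integrals survive, and the case $(t-\tau)\vee0<\theta\le t$, where they drop out, produces exactly \eqref{mall1} and \eqref{mall2}.

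I expect the main difficulty to be the Gronwall estimate in the second step, which has to absorb the shifted derivative term $D_\theta X^{(m)}_\tau(s-\tau)$, together with the systematic use of the merely Lipschitz chain rule: Assumption \ref{assum1} does not assume $b$ and $\sigma$ differentiable, which is exactly why the coefficients appearing in \eqref{mall1}--\eqref{mall2} are abstract adapted processes bounded by $L$ rather than genuine partial derivatives of $b$ and $\sigma$.
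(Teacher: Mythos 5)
Your proposal is correct and follows essentially the same route as the paper: the paper also establishes Malliavin differentiability by invoking the Picard-iteration argument of Theorem 2.2.1 in Nualart's book (which you spell out explicitly), identifies the coefficients $\bar b^{\tau}_i,\bar\sigma^{\tau}_i$ via the Lipschitz chain rule (Proposition 1.2.4 there), and obtains \eqref{mall1}--\eqref{mall2} by applying $D_\theta$ to \eqref{eq2} and using that $D_\theta X_\tau(s-\tau)=0$ for $\theta>s-\tau$. Your additional detail on existence by the method of steps and on absorbing the shifted derivative term in the Gronwall estimate is consistent with what the paper leaves implicit.
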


\begin{proof}
	For $t\in[-\tau, 0]$, we have $X_{\tau}(t) = \varphi(t)$ is deterministic. Hence, the Malliavin derivative of $X_{\tau}(t)$ always vanishes. On the other hand, since the solution $(X_{\tau}(t))_{t\in [0,T]}$ is $\mathbb{F}$-adapted, we always have $D_{\theta}X_{\tau}(t)= 0$ for $\theta>t$. When $\theta\leq t$,  by using the same argument as in the proof of Theorem 2.2.1 in \cite{nualartm2}, we can show that the solution $(X_{\tau}(t))_{t\in [0,T]}$ is also Malliavin differentiable. Applying the operator $D$ to the equation \eqref{eq2} we deduce
\begin{align}
D_{\theta}X_{\tau}(t) &= \sigma(\theta, X_{\tau}(\theta),X_{\tau}(\theta-\tau))\notag\\
&+ \int_{\theta}^{t}D_{\theta}[b(s,X_{\tau}(s),X_{\tau}(s-\tau))]ds+\int_{\theta}^{t}D_{\theta}[\sigma(s,X_{\tau}(s),X_{\tau}(s-\tau))]dB_{s},\,\,\,t\in (0,T].\label{prmall1}
\end{align}
By Proposition 1.2.4 in \cite{nualartm2} and Lipchitz property of $b$ and $\sigma$, there exist adapted processes $\bar{b}^{\tau}_{2}(s)$, $\bar{b}^{\tau}_{3}(s)$, $\bar{\sigma}^{\tau}_{2}(s)$ and $\bar{\sigma}^{\tau}_{3}(s)$, uniformly bounded by $L$ such  that
\begin{equation}\label{prmall1.1}
		D_{\theta}\left[b(s,X_{\tau}(s),X_{\tau}(s-\tau))\right] =  \bar{b}^{\tau}_{2}(s)D_\theta X_{\tau}(s)+ \bar{b}^{\tau}_{3}(s)D_{\theta} X_{\tau}(s-\tau),
	\end{equation}
\begin{equation}\label{prmall1.2}
D_{\theta}\left[\sigma(s,X_{\tau}(s),X_{\tau}(s-\tau))\right] = \bar{\sigma}^{\tau}_{2}(s)D_\theta X_{\tau}(s)+ \bar{\sigma}^{\tau}_{3}(s)D_{\theta} X_{\tau}(s-\tau).
	\end{equation}
Because $D_{\theta} X_{\tau}(s-\tau)=0$ for $\theta>s-\tau,$ we obtain the equations (\ref{mall1}) and (\ref{mall2}) by inserting (\ref{prmall1.1}) and (\ref{prmall1.2}) into (\ref{prmall1}). So the proof of Proposition is complete.
\end{proof}	
\begin{rem}\label{rem1}
	If $b(t,\cdot,\cdot)$ and $\sigma(t,\cdot,\cdot)$ are continuously differentiable, then $\bar{b}^{\tau}_{2}(s) =b_2'(s,X_{\tau}(s),X_{\tau}(s-\tau))$, $\bar{b}^{\tau}_{3}(s) =b_3'(s,X_{\tau}(s),X_{\tau}(s-\tau))$, $\bar{\sigma}^{\tau}_{2}(s) =\sigma_2'(s,X_{\tau}(s),X_{\tau}(s-\tau))$, $\bar{\sigma}^{\tau}_{3}(s) =\sigma_3'(s,X_{\tau}(s),X_{\tau}(s-\tau))$. Consequently, under Assumptions \ref{assum1} and \ref{assum5},  we have
\begin{align}\label{rem1.1}
D_\theta X_{\tau}(t)&=\sigma(\theta,X_{\tau}(\theta),X_{\tau}(\theta-\tau))
+\int_\theta^t b_2'(s,X_{\tau}(s),X_{\tau}(s-\tau))D_\theta X_{\tau}(s)ds\notag\\
&+\int_{\theta+\tau}^t b_3'(s,X_{\tau}(s),X_{\tau}(s-\tau))D_\theta X_{\tau}(s-\tau)ds+\int_\theta^t\sigma_2'(s,X_{\tau}(s),X_{\tau}(s-\tau))D_\theta X_{\tau}(s)dB(s)\notag\\
&+\int_{\theta+\tau}^t\sigma_3'(s,X_{\tau}(s),X_{\tau}(s-\tau))D_\theta X_{\tau}(s-\tau)dB(s),\,\,0\leq \theta\leq t-\tau
\end{align}
and
\begin{align}\label{rem1.2}
D_\theta X_{\tau}(t)&=\sigma(\theta,X_{\tau}(\theta),X_{\tau}(\theta-\tau))+\int_\theta^t b_2'(s,X_{\tau}(s),X_{\tau}(s-\tau))D_\theta X_{\tau}(s)ds\notag\\
&+\int_\theta^t\sigma_2'(s,X_{\tau}(s),X_{\tau}(s-\tau))D_\theta X_{\tau}(s)dB(s),\,\,\,(t-\tau)\vee 0< \theta\leq t.
\end{align}
Moreover, $X_{\tau}(t)$ is twice Malliavin differentiable. The proof of this fact is similar to that of Theorem 2.2.2 in \cite{nualartm2}, we omit the details. The second order Malliavin derivative of $X_{\tau}(t)$  will be computed in Proposition \ref{9jk3k} below.
\end{rem}
We have the following familiar estimates for the moments.
\begin{lem}\label{lem1}
Let Assumption \ref{assum1} hold. Then, for every $p\geq 2,$ we  have
\begin{equation}\label{9hjd2}
E|X_{\tau}(t)|^p\leq C_{p,L,T},\,\,\,t\in[0,T]
\end{equation}
and
\begin{equation}\label{9hjd1}
E|X_{\tau}(t)-X_{\tau}(s)|^p\leq C_{p,L,T}|t-s|^{\frac{p}{2}},\,\,\,s,t\in[0,T],
\end{equation}
where $C_{p,L,T}$ is a positive constant.
\end{lem}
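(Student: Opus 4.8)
The plan is to establish the two estimates \eqref{9hjd2} and \eqref{9hjd1} by the standard combination of Burkholder--Davis--Gundy (BDG), Hölder (or Jensen), the linear growth / Lipschitz part of Assumption \ref{assum1}, and Grönwall's inequality, paying attention to the delay term. First I would prove \eqref{9hjd2}. Fix $p\geq 2$ and $t\in[0,T]$. Starting from \eqref{eq2}, apply the elementary inequality $(a_1+a_2+a_3)^p\leq 3^{p-1}(a_1^p+a_2^p+a_3^p)$ recalled just before the subsection, to get
\begin{align*}
E|X_\tau(t)|^p\leq 3^{p-1}|\varphi(0)|^p+3^{p-1}E\left|\int_0^t b(s,X_\tau(s),X_\tau(s-\tau))ds\right|^p+3^{p-1}E\left|\int_0^t\sigma(s,X_\tau(s),X_\tau(s-\tau))dB(s)\right|^p.
\end{align*}
For the drift term I would use Hölder's inequality in the form $\left|\int_0^t f(s)ds\right|^p\leq t^{p-1}\int_0^t|f(s)|^pds\leq T^{p-1}\int_0^t|f(s)|^pds$; for the stochastic term, BDG gives a bound by $C_{p}E\left(\int_0^t\sigma(\cdots)^2ds\right)^{p/2}\leq C_{p}T^{p/2-1}\int_0^tE|\sigma(\cdots)|^pds$. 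Then the linear growth bound $|b|+|\sigma|\leq L(|x|+|y|+1)$ and another application of $(a+b+c)^p\leq 3^{p-1}(\cdots)$ yield
\begin{align*}
E|X_\tau(t)|^p\leq C_{p,L,T}\left(1+\int_0^t E|X_\tau(s)|^p\,ds+\int_0^t E|X_\tau(s-\tau)|^p\,ds\right).
\end{align*}
The delay term is handled by splitting: for $s\leq\tau$ we have $X_\tau(s-\tau)=\varphi(s-\tau)$, which is bounded by $\|\varphi\|_\infty$ (absorbed into the constant), and for $s>\tau$ we have $\int_\tau^tE|X_\tau(s-\tau)|^pds=\int_0^{t-\tau}E|X_\tau(u)|^pdu\leq\int_0^tE|X_\tau(u)|^pdu$. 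Hence $E|X_\tau(t)|^p\leq C_{p,L,T}\bigl(1+\int_0^tE|X_\tau(s)|^pds\bigr)$, and since $\sup_{s\in[0,T]}E|X_\tau(s)|^p$ is finite (this is part of the well-posedness already invoked in Proposition \ref{prop1}), Grönwall's lemma gives \eqref{9hjd2} with a constant independent of $\tau$.

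For \eqref{9hjd1}, assume without loss of generality $s<t$. Then
\begin{align*}
X_\tau(t)-X_\tau(s)=\int_s^t b(r,X_\tau(r),X_\tau(r-\tau))dr+\int_s^t\sigma(r,X_\tau(r),X_\tau(r-\tau))dB(r),
\end{align*}
and I would again apply $(a+b)^p\leq 2^{p-1}(a^p+b^p)$, then Hölder on the $dr$-integral (picking up a factor $(t-s)^{p-1}$) and BDG on the stochastic integral (picking up $(t-s)^{p/2-1}$ after Hölder inside). Using linear growth once more bounds each integrand's $p$-th moment by $C(1+E|X_\tau(r)|^p+E|X_\tau(r-\tau)|^p)$, which by \eqref{9hjd2} together with the boundedness of $\varphi$ is $\leq C_{p,L,T}$ uniformly in $r$. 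Since $(t-s)^{p-1}(t-s)\leq T^{p/2}(t-s)^{p/2}$ and $(t-s)^{p/2-1}(t-s)=(t-s)^{p/2}$, both contributions are $O\bigl((t-s)^{p/2}\bigr)$, giving \eqref{9hjd1}.

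I do not anticipate a serious obstacle here; this is a routine a priori moment estimate. The only point that needs a little care is the delay term $X_\tau(s-\tau)$: one must remember that for $s\in[0,\tau]$ this is the deterministic initial segment $\varphi(s-\tau)$, whose $p$-th moment is simply $|\varphi(s-\tau)|^p\leq\|\varphi\|_\infty^p$, and that the change of variables on $[\tau,t]$ reduces the delayed integral to an integral of $E|X_\tau(\cdot)|^p$ over a subinterval of $[0,t]$, so the Grönwall argument closes cleanly and the resulting constant $C_{p,L,T}$ can be taken independent of $\tau\geq 0$ — which is exactly what will be needed in the subsequent sections when $\tau$ varies.
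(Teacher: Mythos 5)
Your proof is correct and follows exactly the route the paper intends: the authors omit the argument, citing Lemmas 6.1 and 6.2 of Mao's book with the remark that the Burkholder--Davis--Gundy inequality replaces the It\^o isometry for general $p\geq 2$, and your combination of BDG, H\"older, linear growth, the change of variables for the delayed term, and Gr\"onwall is precisely that standard argument, with the a priori finiteness of $\sup_{s\in[0,T]}E|X_\tau(s)|^p$ correctly flagged before invoking Gr\"onwall. Nothing further is needed.
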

\begin{proof} The proof  is similar to that of Lemmas 6.1 and 6.2 in \cite{Mao2008}. So we omit it. Here, just remark that we use the Burkholder-Davis-Gundy inequality instead of the It\^o isometry.
\end{proof}

\begin{lem}\label{lem2} Let Assumption \ref{assum1} hold. Then, for every $p\geq 2,$ we  have
\begin{equation}\label{lem2.1}
	E|D_\theta X_{\tau}(t)|^p\leq C_{p,L,T},\,\,\,0\leq \theta \leq t\leq T
\end{equation}
and
\begin{equation}\label{lem2.2}
E|D_\theta X_{\tau}(t)-D_\theta X_{\tau}(s)|^p\leq C_{p,L,T}|t-s|^{p/2},\,\,\,0\leq \theta \leq s\vee t\leq T,
\end{equation}
where $C_{p,L,T}$ is a positive constant.
\end{lem}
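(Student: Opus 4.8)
The plan is to establish the two estimates \eqref{lem2.1} and \eqref{lem2.2} by treating the Malliavin derivative process $(D_\theta X_\tau(t))_{t \ge \theta}$ as the solution of the linear stochastic (delay) integral equation \eqref{mall1}--\eqref{mall2}, with the initial datum $\sigma(\theta, X_\tau(\theta), X_\tau(\theta-\tau))$. For the bound \eqref{lem2.1}, I would fix $\theta$ and write, for $t \in [\theta, T]$, the quantity $\varphi_\theta(t) := \sup_{\theta \le u \le t} E|D_\theta X_\tau(u)|^p$. Applying the elementary inequality $(a_1 + \cdots + a_n)^p \le n^{p-1}(a_1^p + \cdots + a_n^p)$ to the right-hand side of \eqref{mall1} (the case \eqref{mall2} is contained in it by the stated convention), then using the Burkholder--Davis--Gundy inequality on the two stochastic integrals and Hölder's inequality in time on the two Lebesgue integrals, together with the uniform bound $L$ on $\bar b^\tau_2, \bar b^\tau_3, \bar\sigma^\tau_2, \bar\sigma^\tau_3$ and the linear-growth bound on $\sigma$ combined with the moment estimate \eqref{9hjd2} from Lemma \ref{lem1}, one gets an inequality of the form
\begin{equation*}
E|D_\theta X_\tau(t)|^p \le C_{p,L,T}\Bigl(1 + \int_\theta^t E|D_\theta X_\tau(s)|^p\,ds + \int_{\theta+\tau}^t E|D_\theta X_\tau(s-\tau)|^p\,ds\Bigr).
\end{equation*}
The shifted term is handled by noting that $E|D_\theta X_\tau(s-\tau)|^p$ either vanishes (when $s-\tau < \theta$, since then $D_\theta X_\tau(s-\tau) = 0$) or equals $E|D_\theta X_\tau(s-\tau)|^p$ with $\theta \le s-\tau \le t$, so it is dominated by $\sup_{\theta \le u \le t} E|D_\theta X_\tau(u)|^p$; after a change of variables it is absorbed into the same Grönwall structure. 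Passing to the supremum over $t$ and invoking Grönwall's lemma then yields \eqref{lem2.1} with a constant independent of $\theta$ and $\tau$.

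For the Hölder-type estimate \eqref{lem2.2}, I would assume without loss of generality $\theta \le s < t \le T$ (the case $s\vee t < \theta$ being trivial since both derivatives vanish, and the case $s < \theta \le t$ reducing to \eqref{lem2.1} applied on $[\theta,t]$ plus the fact that $D_\theta X_\tau(s)=0$, giving a bound by $C\,t^{p/2}$ which for $s<\theta\le t$ is $\le C|t-s|^{p/2}$ after adjusting — actually one should be a little careful here, so I would just bound $E|D_\theta X_\tau(t)-D_\theta X_\tau(s)|^p$ in that regime directly from the integral representation over $[\theta,t]$, which has length $t-\theta \le t-s$). In the main regime $\theta \le s < t$, subtract the integral representations \eqref{mall1} (or \eqref{mall2}) at times $t$ and $s$: the $\sigma(\theta,\cdot,\cdot)$ terms cancel exactly, and the difference becomes a sum of four integrals over the thin intervals $[s,t]$, $[s+\tau, t+\tau]\cap[\cdot]$, and the two stochastic analogues. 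Applying BDG and Hölder to each, using the uniform bounds on the coefficients and the already-established bound \eqref{lem2.1} on $E|D_\theta X_\tau(\cdot)|^p$, each of these four terms is bounded by $C_{p,L,T}|t-s|^{p/2}$ (the Lebesgue integrals contribute $|t-s|^p \le T^{p/2}|t-s|^{p/2}$ and the stochastic integrals contribute $|t-s|^{p/2}$ via BDG), which gives \eqref{lem2.2}.

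The main obstacle, and the point requiring the most care, is the bookkeeping around the delay shift: the interval $[\theta+\tau, t]$ in \eqref{mall1} is empty when $t < \theta + \tau$, and the integrand $D_\theta X_\tau(s-\tau)$ must be reinterpreted according to whether $s - \tau$ falls in $[-\tau,0]$ (where it is $0$) or in $[0,T]$ (where \eqref{lem2.1} applies). One must verify that, after the change of variable $s \mapsto s-\tau$, the resulting integral $\int_\theta^{t-\tau} E|D_\theta X_\tau(u)|^p\,du$ genuinely feeds back into the same Grönwall functional $\varphi_\theta(\cdot)$ uniformly in $\tau \ge 0$ — in particular the constant must not degenerate as $\tau \to 0$ or $\tau \to \infty$. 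Because all coefficient bounds are by the single constant $L$ and the shifted integral is over a sub-interval of $[\theta, t]$, this works out, but it is the step where the ``obvious'' argument needs a genuine check. The rest is the routine BDG/Hölder/Grönwall machinery, parallel to Lemmas 6.1--6.2 of \cite{Mao2008} and to the proof of Lemma \ref{lem1}.
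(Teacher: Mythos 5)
Your treatment of \eqref{lem2.1}, and of \eqref{lem2.2} in the regime $\theta\le s\wedge t$, is essentially the paper's proof. The paper merges \eqref{mall1}--\eqref{mall2} into a single equation with indicators $\ind_{[\theta+\tau,t]}(s)$ on the delayed terms, bounds the initial term $\sigma(\theta,X_\tau(\theta),X_\tau(\theta-\tau))$ in $L^p$ via linear growth and \eqref{9hjd2}, applies the elementary power inequality, H\"older and Burkholder--Davis--Gundy, dominates the shifted integral $\int_\theta^t E|D_\theta X_\tau(u-\tau)|^p\,du$ by $\int_\theta^t E|D_\theta X_\tau(u)|^p\,du$ exactly as you describe (it vanishes where $u-\tau<\theta$ and is a sub-interval integral otherwise, uniformly in $\tau$), and concludes by Gronwall. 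For \eqref{lem2.2} it splits according to where $\theta$ sits relative to $s-\tau$ and $t-\tau$; in every case the difference of the two representations is a sum of integrals over intervals of length at most $t-s$ (in the middle case the delayed-term interval is $[\theta+\tau,t]$, of length $t-\theta-\tau<t-s$ precisely because $\theta>s-\tau$), and H\"older/BDG together with \eqref{lem2.1} give $C|t-s|^{p/2}$. Your description of the thin intervals is slightly off in the labels but correct in substance.

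The one step that genuinely fails is your handling of the case $s<\theta\le t$. There $D_\theta X_\tau(s)=0$ while $D_\theta X_\tau(t)=\sigma(\theta,X_\tau(\theta),X_\tau(\theta-\tau))+\int_\theta^t(\cdots)$, and ``bounding directly from the integral representation over $[\theta,t]$'' does not yield a factor $|t-s|^{p/2}$, because the representation contains the non-integral term $\sigma(\theta,\cdot,\cdot)$, which is $O(1)$, not $O(|t-\theta|^{1/2})$. In fact the estimate is simply false in that regime: take $\sigma\equiv 1$, so that $E|D_\theta X_\tau(t)-D_\theta X_\tau(s)|^p$ stays bounded away from $0$ while $|t-s|^{p/2}\to 0$. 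This is really a defect of the statement (the ``$s\vee t$'' should be ``$s\wedge t$''): the paper's own proof never treats $\theta$ strictly between $s$ and $t$ --- all three of its cases write the difference as integrals starting at $s$, i.e.\ tacitly assume $\theta\le s\wedge t$ --- and that is the only form in which the lemma is invoked later (e.g.\ in Lemma \ref{jfks1a}, where both time arguments $s-\tau_1$ and $s-\tau_2$ exceed $\theta$). So the correct fix is to restrict to $\theta\le s\wedge t$ rather than to try to patch the excluded case.
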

	\begin{proof} By the linear growth property of $\sigma$ and the estimate (\ref{9hjd2}) we have
\begin{equation}\label{long}
E|\sigma(\theta,X_{\tau}(\theta),X_{\tau}(\theta-\tau))|^{p}\leq C_{p,L,T},\,\,\forall\,\,0\leq \theta\leq T.
\end{equation}
It follows from equations (\ref{mall1}) and (\ref{mall2}) that the Malliavin derivative $D_\theta X_{\tau}(t)$ satisfies
\begin{align}
		D_\theta X_{\tau}(t)&=\sigma(\theta,X_{\tau}(\theta),X_{\tau}(\theta-\tau))
		+\int_{\theta}^{t} \bar{b}^{\tau}_{2}(s)D_\theta X_{\tau}(s)ds+\int_{\theta}^{t} \bar{b}^{\tau}_{3}(s)D_\theta X_{\tau}(s-\tau)\ind_{[\theta+\tau,t]}(s)ds\nonumber\\
		&+\int_{\theta}^{t}\bar{\sigma}^{\tau}_{2}(s)D_\theta X_{\tau}(s)dB(s)+\int_{\theta}^{t}\bar{\sigma}^{\tau}_{3}(s)D_\theta X_{\tau}(s-\tau)\ind_{[\theta+\tau,t]}(s)dB(s),\,\,\,0\leq \theta\leq t\leq T.\label{jkf78}
	\end{align}
We therefore get
\begin{align*}
			E&|D_\theta X_{\tau}(t)|^{p}\\
&\leq 5^{p-1}\bigg( E|\sigma(\theta,X_{\tau}(\theta),X_{\tau}(\theta-\tau))|^{p}
			+E\left|\int_{\theta}^{t} \bar{b}^{\tau}_{2}(s)D_\theta X_{\tau}(s)ds\right|^{p}+E\left|\int_{\theta}^{t} \bar{b}^{\tau}_{3}(s)D_\theta X_{\tau}(s-\tau)\ind_{[\theta+\tau,t]}(s)ds\right|^{p}\nonumber\\
			&\quad+E\left|\int_{\theta}^{t}\bar{\sigma}^{\tau}_{2}(s)D_\theta X_{\tau}(s)dB(s)\right|^{p}+E\left|\int_{\theta}^{t}\bar{\sigma}^{\tau}_{3}(s)D_\theta X_{\tau}(s-\tau)\ind_{[\theta+\tau,t]}(s)dB(s)\right|^{p}\bigg),\,\,\,0\leq \theta\leq t\leq T.
\end{align*}
By the boundedness of $\bar{b}^{\tau}_{2}(s)$, $\bar{b}^{\tau}_{3}(s)$, $\bar{\sigma}^{\tau}_{2}(s)$, $\bar{\sigma}^{\tau}_{3}(s),$ the estimate (\ref{long}) and the H\"{o}lder and Burkholder-Davis-Gundy inequalities, it is easy to see that
\begin{align*}
E|D_\theta X_{\tau}(t)|^{p}&\leq C_{p,L,T} + C_{p,L,T}\int_{\theta}^{t}E|D_{\theta}X_{\tau}(s)|^{p}ds+ C_{p,L,T}\int_{\theta}^{t}E|D_{\theta}X_{\tau}(s-\tau)|^{p}ds\\
&\leq C_{p,L,T} + C_{p,L,T}\int_{\theta}^{t}E|D_{\theta}X_{\tau}(s)|^{p}ds,\,\,\,0\leq \theta\leq t\leq T,
\end{align*}
where $C_{p,L,T}$ is some positive constant. So, we can obtain (\ref{lem2.1}) by using Gronwall's lemma.

In order to prove (\ref{lem2.2}), we assume, without the loss of generally, that $s < t$. We consider three cases separately.

\noindent{\em Case 1: $0\leq \theta \leq s-\tau$}. From the equation \eqref{mall1}, we have
\begin{align*}
D_{\theta}X_{\tau}(t)-D_{\theta}X_{\tau}(s) &= \int_{s}^{t}\bar{b}_{2}^{\tau}(u)D_{\theta}X_{\tau}(u)du+\int_{s}^{t}\bar{b}_{3}^{\tau}(u)D_{\theta}X_{\tau}(u-\tau)du\\
		&+\int_{s}^{t}\bar{\sigma}_{2}^{\tau}(u)D_{\theta}X_{\tau}(u)dB_{u}+\int_{s}^{t}\bar{\sigma}_{3}^{\tau}(u)D_{\theta}X_{\tau}(u-\tau)du.
	\end{align*}
	Using  the H\"{o}lder and Burkholder-Davis-Gundy inequalities and (\ref{lem2.1}), we have
	\begin{align*}
		E|D_{\theta}X_{\tau}(t)-D_{\theta}X_{\tau}(s)|^{p} &\leq C_{p,L,T} \left((t-s)^{p-1}\int_{s}^{t}E|D_{\theta}X_{\tau}(u)|^{p}du + (t-s)^{\frac{p}{2}-1} \int_{s}^{t}E|D_{\theta}X_{\tau}(u)|^{p}du\right)\\
		&\leq C_{p,L,T}|t-s|^{p/2}.
	\end{align*}
\noindent{\em Case 2: $s-\tau < \theta \leq t-\tau$}. From the equations \eqref{mall1} and \eqref{mall2} we have
		\begin{align*}
	D_{\theta}X_{\tau}(t)-D_{\theta}X_{\tau}(s) &= \int_{s}^{t}\bar{b}_{2}^{\tau}(u)D_{\theta}X_{\tau}(u)du+\int_{\theta+\tau}^{t}\bar{b}_{3}^{\tau}(u)D_{\theta}X_{\tau}(u-\tau)du\\ &+\int_{s}^{t}\bar{\sigma}_{2}^{\tau}(u)D_{\theta}X_{\tau}(u)dB_{u}+\int_{\theta+\tau}^{t}\bar{\sigma}_{3}^{\tau}(u)D_{\theta}X_{\tau}(u-\tau)du.
	\end{align*}
	Using the same arguments as in the proof of {\it Case 1}, we obtain
	\begin{align*}
	E|D_{\theta}X_{\tau}(t)-D_{\theta}X_{\tau}(s)|^{p} &\leq C_{p,L,T} \left(|t-s|^{p/2}+|t-\theta-\tau|^{p/2}\right)\\
	&\leq C_{p,L,T}|t-s|^{p/2}.
	\end{align*}
\noindent{\em Case 3: $t-\tau < \theta \leq t$}. From  $\eqref{mall2}$ we have
	\begin{align*}
	D_{\theta}X_{\tau}(t)-D_{\theta}X_{\tau}(s)= \int_{s}^{t}\bar{b}_{2}^{\tau}(u)D_{\theta}X_{\tau}(u)du+
	\int_{s}^{t}\bar{\sigma}_{2}^{\tau}(u)D_{\theta}X_{\tau}(u)dB_{u},
	\end{align*}
and hence, we also have
$$E|D_{\theta}X_{\tau}(t)-D_{\theta}X_{\tau}(s)|^{p}\leq C_{p,L,T}|t-s|^{p/2}.$$
This finishes the proof of Proposition.	
	\end{proof}

\begin{prop}\label{lem4}
Let Assumptions \ref{assum1} hold and, in addition, we assume that
$$|\sigma(t,x,y)|\geq \sigma_0>0,\,\,\forall\,t\in [0,T],x,y\in \mathbb{R}.$$
Then, for every $p\geq 1$ and for all  $0<t\leq T,$ we have
\begin{equation}\label{lem4.1}
E\left[\frac{1}{\|DX_{\tau}(t)\|^{2p}_{L^2[0,T]}}\right]\leq
C_{p,L,T}\,t^{-p},
\end{equation}
where $C_{p,L,T}$ is a positive constant.
\end{prop}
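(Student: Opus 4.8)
The plan is to estimate the negative moments of the Malliavin covariance $\gamma_\tau(t):=\|DX_\tau(t)\|^2_{L^2[0,T]}$ by a localization argument near the endpoint $t$, checking that the delay terms do not spoil the estimates and that the constants come out of the right order $t^{-p}$. Since $X_\tau(t)$ is $\mathbb{F}$-adapted, Proposition \ref{prop1}(ii) gives $D_\theta X_\tau(t)=0$ for $\theta>t$, so that $\gamma_\tau(t)=\int_0^t|D_\theta X_\tau(t)|^2d\theta\geq\int_{t-\eta}^t|D_\theta X_\tau(t)|^2d\theta$ for every $\eta\in(0,t]$. For $\theta\in(t-\eta,t)$ we write, using \eqref{mall1} when $\theta\leq t-\tau$ and \eqref{mall2} when $\theta>t-\tau$, $D_\theta X_\tau(t)=\sigma(\theta,X_\tau(\theta),X_\tau(\theta-\tau))+R_\theta(t)$, where $R_\theta(t)$ collects the remaining (at most two ordinary and two stochastic) integrals appearing there; each is taken over a subinterval of $[\theta,t]$ with integrand bounded by $L$ times $|D_\theta X_\tau(u)|$ for points $u$ ranging in $[0,t]$. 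Using $(a+b)^2\geq\tfrac12a^2-b^2$ and $|\sigma|\geq\sigma_0$ we obtain $|D_\theta X_\tau(t)|^2\geq\tfrac12\sigma_0^2-|R_\theta(t)|^2$, hence
$$\gamma_\tau(t)\geq\tfrac12\sigma_0^2\eta-\int_{t-\eta}^t|R_\theta(t)|^2d\theta.$$

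The next step is the uniform moment bound $E|R_\theta(t)|^{2q}\leq C_{q,L,T}(t-\theta)^q$ for $\theta\in(0,t)$, which follows from the Hölder and Burkholder--Davis--Gundy inequalities, the boundedness of $\bar b^\tau_2,\bar b^\tau_3,\bar\sigma^\tau_2,\bar\sigma^\tau_3$ by $L$, and the uniform estimate $E|D_\theta X_\tau(u)|^{2q}\leq C_{2q,L,T}$ of Lemma \ref{lem2}; the last point also controls the delayed integrands $D_\theta X_\tau(\cdot-\tau)$, which is precisely why the delay is harmless and why $\eta$ may be taken as large as $t$. A further Hölder inequality in $\theta$ gives $E\big[(\int_{t-\eta}^t|R_\theta(t)|^2d\theta)^q\big]\leq\eta^{q-1}\int_{t-\eta}^tE|R_\theta(t)|^{2q}d\theta\leq C_{q,L,T}\,\eta^{2q}$, and combining this with the displayed lower bound and Markov's inequality yields, for every $\eta\in(0,t]$,
$$P\Big(\gamma_\tau(t)<\tfrac14\sigma_0^2\eta\Big)\leq P\Big(\int_{t-\eta}^t|R_\theta(t)|^2d\theta>\tfrac14\sigma_0^2\eta\Big)\leq C_{q,L,T,\sigma_0}\,\eta^q.$$
Equivalently, writing $\lambda=\tfrac14\sigma_0^2\eta$, we get $P(\gamma_\tau(t)<\lambda)\leq C_{q,L,T,\sigma_0}\lambda^q$ for all $0<\lambda\leq\tfrac14\sigma_0^2t$ (in particular $\gamma_\tau(t)>0$ a.s., which legitimizes the negative moment).

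Finally I would pass from this tail bound to the moment by the layer-cake formula $E[\gamma_\tau(t)^{-p}]=\int_0^\infty p\lambda^{-p-1}P(\gamma_\tau(t)<\lambda)d\lambda$, splitting the integral at $\lambda_0:=\tfrac14\sigma_0^2t$ and fixing any $q>p$ (say $q=\lceil p\rceil+1$). On $(\lambda_0,\infty)$ one bounds the probability by $1$, which contributes $\lambda_0^{-p}=(4/\sigma_0^2)^pt^{-p}$; on $(0,\lambda_0)$ one inserts the tail bound, which contributes a multiple of $\lambda_0^{q-p}\leq C_{q,\sigma_0}T^{q-p}$, a constant, hence (since $t\leq T$) bounded by a constant times $t^{-p}$. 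Adding the two pieces gives the assertion. I expect this last bookkeeping to be the main point requiring care — running the localization window $\eta$ all the way up to $t$ and arranging the threshold $\lambda_0$ to be proportional to $t$, so that the final constant is genuinely $O(t^{-p})$ and not something worse — together with the uniform-in-$\tau$ moment control of $R_\theta(t)$ that neutralizes the delayed terms.
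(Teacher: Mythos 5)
Your proposal is correct and follows essentially the same route as the paper's proof: a lower bound on $\|DX_\tau(t)\|^2_{L^2[0,T]}$ obtained by localizing near the endpoint $t$ and using $|\sigma|\geq\sigma_0$, a small-ball probability estimate of arbitrary polynomial order via Markov's inequality and the uniform moment bounds of Lemma \ref{lem2} (which indeed neutralize the delayed terms), and finally the layer-cake formula split at a threshold proportional to $t$. The paper merely parametrizes the localization window as $\eta=t\varepsilon=\frac{4}{y\sigma_0^2}$ with $y=1/\lambda$ instead of working with $\eta$ directly, so the two arguments coincide up to bookkeeping.
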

\begin{proof} Fixed $t\in (0,T].$ By using the fundamental inequality $(a+b+c)^{2}\geq \frac{a^{2}}{2}- 2(b^{2}+c^{2}),$ we obtain from the equation (\ref{jkf78}) that
\begin{align}
		|D_\theta X_{\tau}(t)|^2&\geq \frac{1}{2}\sigma^2(\theta,X_{\tau}(\theta),X_{\tau}(\theta-\tau))
		-2\bigg(\int_{\theta}^{t} \bar{b}^{\tau}_{2}(s)D_\theta X_{\tau}(s)ds+\int_{\theta}^{t} \bar{b}^{\tau}_{3}(s)D_\theta X_{\tau}(s-\tau)\ind_{[\theta+\tau,t]}(s)ds\bigg)^2\nonumber\\
		&-2\bigg(\int_{\theta}^{t}\bar{\sigma}^{\tau}_{2}(s)D_\theta X_{\tau}(s)dB(s)+\int_{\theta}^{t}\bar{\sigma}^{\tau}_{3}(s)D_\theta X_{\tau}(s-\tau)\ind_{[\theta+\tau,t]}(s)dB(s)\bigg)^2,\,\,\,0\leq \theta\leq t\leq T.\notag
	\end{align}
For each $y\geq y_0:=\frac{4}{t\sigma_0^2},$ the real number $\varepsilon:=\frac{4}{y\sigma_0^2t}$ belongs to $(0,1].$  Hence,
\begin{align}
 \|DX_{\tau}(t)\|^{2}_{L^2[0,T]}& \geq\int_{t(1-\varepsilon)}^{t}|D_\theta X_{\tau}(t)|^2d\theta\geq\int_{t(1-\varepsilon)}^{t}\frac{\sigma^2(\theta,X_{\tau}(\theta),X_{\tau}(\theta-\tau))}{2}d\theta\notag\\
 &- 2\int_{t(1-\varepsilon)}^{t}\bigg(\int_{\theta}^{t} \bar{b}^{\tau}_{2}(s)D_\theta X_{\tau}(s)ds+\int_{\theta}^{t} \bar{b}^{\tau}_{3}(s)D_\theta X_{\tau}(s-\tau)\ind_{[\theta+\tau,t]}(s)ds\bigg)^2d\theta \nonumber \\
& - 2\int_{t(1-\varepsilon)}^{t}\bigg(\int_{\theta}^{t}\bar{\sigma}^{\tau}_{2}(s)D_\theta X_{\tau}(s)dB(s)+\int_{\theta}^{t}\bar{\sigma}^{\tau}_{3}(s)D_\theta X_{\tau}(s-\tau)\ind_{[\theta+\tau,t]}(s)dB(s)\bigg)^2d\theta\nonumber\\
&\geq\frac{\sigma_0^2t\varepsilon}{2} - I_{y}(t)=\frac{2}{y} - I_{y}(t),\nonumber
\end{align}
where
\begin{align*}
I_{y}(t)&:=2\int_{t(1-\varepsilon)}^{t}\bigg(\int_{\theta}^{t} \bar{b}^{\tau}_{2}(s)D_\theta X_{\tau}(s)ds+\int_{\theta}^{t} \bar{b}^{\tau}_{3}(s)D_\theta X_{\tau}(s-\tau)\ind_{[\theta+\tau,t]}(s)ds\bigg)^2d\theta \\
&+2\int_{t(1-\varepsilon)}^{t} \bigg(\int_{\theta}^{t}\bar{\sigma}^{\tau}_{2}(s)D_\theta X_{\tau}(s)dB(s)+\int_{\theta}^{t}\bar{\sigma}^{\tau}_{3}(s)D_\theta X_{\tau}(s-\tau)\ind_{[\theta+\tau,t]}(s)dB(s)\bigg)^2d\theta.
\end{align*}
Then, by Markov inequality, we obtain
\begin{align}\label{3.13}
P\left(\|DX_{\tau}(t)\|^{2}_{L^2[0,T]}\leq \frac{1}{y}\right) \leq P\left(\frac{2}{y} - I_{y}(t)\leq \frac{1}{y}\right)= P\left(I_{y}(t)\geq \frac{1}{y}\right)\leq y^{q/2}E\left(|I_{y}(t)|^{q/2}\right)\,\,\,\forall\,\,q\geq 2.
\end{align}
By the inequality $(|a|+|b|)^{q/2}\leq 2^{q/2-1}(|a|^{q/2}+(|b|^{q/2}),$ we get
\begin{align}
&E|I_{y}(t)|^{q/2}\nonumber \\
&\leq 2^{q-1}\bigg[E\left(\bigg(\int_{\theta}^{t} \bar{b}^{\tau}_{2}(s)D_\theta X_{\tau}(s)ds+\int_{\theta}^{t} \bar{b}^{\tau}_{3}(s)D_\theta X_{\tau}(s-\tau)\ind_{[\theta+\tau,t]}(s)ds\bigg)^2d\theta\right)^{q/2} \notag\\
&+ E\left(\int_{t(1-\varepsilon)}^{t} \bigg(\int_{\theta}^{t}\bar{\sigma}^{\tau}_{2}(s)D_\theta X_{\tau}(s)dB(s)+\int_{\theta}^{t}\bar{\sigma}^{\tau}_{3}(s)D_\theta X_{\tau}(s-\tau)\ind_{[\theta+\tau,t]}(s)dB(s)\bigg)^2d\theta\right)^{q/2}\bigg].\nonumber
\end{align}
By using the H\"{o}lder and Burkholder-Davis-Gundy inequalities, it follows from (\ref{lem2.1}) that
\begin{align}
&E|I_{y}(t)|^{q/2}\nonumber \\
&\leq C_{q,L,T}(t\varepsilon)^{\frac{q-2}{2}}\left(\int_{t(1-\varepsilon)}^{t}(t-\theta)^{\frac{q-2}{2}}\int_{\theta}^{t}E|D_{\theta}X_{\tau}(s)|^{p}dsd\theta + \int_{t(1-\varepsilon)}^{t} \left(\int_{\theta}^{t}E|D_{\theta}X_{\tau}(s)|^{2}ds\right)^{q/2}d\theta\right) \nonumber\\
&\leq C_{q,L,T}(t\varepsilon)^{\frac{q-2}{2}}\left(\int_{t(1-\varepsilon)}^{t}(t-\theta)^{\frac{q}{2}}d\theta + \int_{t(1-\varepsilon)}^{t} \left(t-\theta\right)^{q/2}d\theta\right) \nonumber\\
&\leq C_{q,L,T}(t\varepsilon)^{\frac{q-2}{2}} (t\varepsilon)^{\frac{q}{2}+1}\nonumber\\
&= C_{q,L,T}\left(\frac{4}{y\sigma_0^2}\right)^{q},\label{3.14}
\end{align}
where $C_{q,L,T}$ is a positive constant. Combining $\eqref{3.13}$ and $\eqref{3.14}$ we deduce
\begin{align*}
P\left(\|DX_{\tau}(t)\|^{2}_{L^2[0,T]}\leq \frac{1}{y}\right)\leq C_{q,L,T}y^{p/2}\left(\frac{4}{y\sigma_0^2}\right)^{p}\,\,\,\forall\,\,p\geq 2,y\geq y_0.
\end{align*}
For any $p\geq 2$ and $q=2p+1$, we obtain the following estimates
\begin{align}
E\left(\|DX_{\tau}(t)\|^{-2p}_{L^2[0,T]}\right) & = \int_{0}^{\infty}p y^{p -1}P\left(\|DX_{\tau}(t)\|^{-2}_{L^2[0,T]}>y\right)dy\nonumber\\
  &\leq  \int_{0}^{y_0}p y^{p -1}dy + \int_{y_0}^{\infty}p y^{p -1}P\left(\|DX_{\tau}(t)\|^{2}_{L^2[0,T]}<\frac{1}{y}\right)dy\nonumber\\
&\leq y_0^{p}+ pC_{p,L,T}\int_{y_0}^{\infty}y^{p -1}y^{q/2}\left(\frac{4}{y\sigma_0^2}\right)^{q}dy\nonumber\\
& =y_0^{p} +2p C_{p,L,T}\left(\frac{4}{\sigma_0^2}\right)^{2p+1}y_0^{-\frac{1}{2}}.\label{3.15}
\end{align}
We  recall here that $y_0=\frac{4}{t\sigma_0^2}.$ So (\ref{lem4.1}) follows from (\ref{3.15}).
\end{proof}

\begin{prop}\label{9jk3k}Let Assumptions \ref{assum1} and \ref{assum5} hold. It holds that
$$E|D_{r}D_{\theta}X_{\tau}(t)|^{4} \leq C_{T,L}\,\,\forall\,0\leq \theta,r\leq t\leq T,$$ 
where $C_{L,T}$ is a positive constant.
\end{prop}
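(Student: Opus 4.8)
The plan is to differentiate the equation for the first Malliavin derivative once more. By Remark \ref{rem1}, under Assumptions \ref{assum1} and \ref{assum5} the process $X_\tau(t)$ is twice Malliavin differentiable and $D_\theta X_\tau(t)$ satisfies the version of \eqref{jkf78} in which $\bar b_2^\tau,\bar b_3^\tau,\bar\sigma_2^\tau,\bar\sigma_3^\tau$ are replaced by the genuine partial derivatives $b_2',b_3',\sigma_2',\sigma_3'$ evaluated along $(s,X_\tau(s),X_\tau(s-\tau))$, which are themselves Malliavin differentiable with derivatives expressible through the second partials $b_{ij}'',\sigma_{ij}''$. Using $D_rD_\theta=D_\theta D_r$, I may assume $r\geq\theta$; then $D_rX_\tau(\theta)=D_rX_\tau(\theta-\tau)=0$ for $r>\theta$, so the term obtained by applying $D_r$ to $\sigma(\theta,X_\tau(\theta),X_\tau(\theta-\tau))$ vanishes. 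Applying $D_r$ to (the differentiable version of) \eqref{jkf78}, via the chain rule \eqref{haha1} and the rules recalled in Section 2 for the Malliavin derivative of Lebesgue and It\^o integrals, I obtain an equation of the Volterra form
\begin{align}
D_rD_\theta X_\tau(t)&=\Phi_{r,\theta}(t)+\int_r^t \bar b_2^\tau(s)\,D_rD_\theta X_\tau(s)\,ds+\int_r^t \bar b_3^\tau(s)\,D_rD_\theta X_\tau(s-\tau)\,\ind_{[r+\tau,t]}(s)\,ds\notag\\
&\quad+\int_r^t \bar\sigma_2^\tau(s)\,D_rD_\theta X_\tau(s)\,dB(s)+\int_r^t \bar\sigma_3^\tau(s)\,D_rD_\theta X_\tau(s-\tau)\,\ind_{[r+\tau,t]}(s)\,dB(s),\notag
\end{align}
where $\bar b_2^\tau,\dots,\bar\sigma_3^\tau$ are adapted and bounded by $L$, and the ``source term'' $\Phi_{r,\theta}(t)$ is a finite sum consisting of: the evaluation term $\sigma_2'(r,X_\tau(r),X_\tau(r-\tau))\,D_\theta X_\tau(r)$ produced by differentiating the stochastic integral at $s=r$, an analogous delayed evaluation term, and Lebesgue and It\^o integrals over $[r,t]$ whose integrands are products of the second partials $b_{ij}'',\sigma_{ij}''$ with factors of the form $D_rX_\tau(\cdot)$, $D_\theta X_\tau(\cdot)$ (and their delayed versions).

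Next I would estimate $E|\Phi_{r,\theta}(t)|^4$. Each summand has all coefficient factors bounded by $L$ thanks to Assumptions \ref{assum1}--\ref{assum5}; applying H\"older's inequality, the Burkholder--Davis--Gundy inequality, the Cauchy--Schwarz inequality to decouple products such as $D_rX_\tau(s)\,D_\theta X_\tau(s)$, and the uniform moment bound $E|D_\theta X_\tau(s)|^p\leq C_{p,L,T}$ from Lemma \ref{lem2}, I conclude
$$\sup_{0\leq r,\theta\leq t\leq T}E|\Phi_{r,\theta}(t)|^4\leq C_{L,T}.$$
Then, raising the displayed identity to the fourth power, using the fundamental inequality $(a_1+\cdots+a_5)^4\leq 5^3(a_1^4+\cdots+a_5^4)$, H\"older's and the Burkholder--Davis--Gundy inequalities and the boundedness of the coefficient processes by $L$, I obtain
$$E|D_rD_\theta X_\tau(t)|^4\leq C_{L,T}+C_{L,T}\int_r^t E|D_rD_\theta X_\tau(s)|^4\,ds+C_{L,T}\int_r^t E|D_rD_\theta X_\tau(s-\tau)|^4\,\ind_{[r+\tau,t]}(s)\,ds.$$
The delayed term is handled exactly as in the proof of Lemma \ref{lem2}: on $\{s\geq r+\tau\}$ one has $s-\tau\in[r,t-\tau]$, so a change of variables folds the last integral into the first, giving $E|D_rD_\theta X_\tau(t)|^4\leq C_{L,T}+C_{L,T}\int_r^t E|D_rD_\theta X_\tau(s)|^4\,ds$, and Gronwall's lemma yields $E|D_rD_\theta X_\tau(t)|^4\leq C_{L,T}$ uniformly in $\theta,r,t$, as claimed.

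The main obstacle I anticipate is the bookkeeping in deriving the Volterra equation for $D_rD_\theta X_\tau(t)$: one must correctly identify the boundary (evaluation) terms produced by applying $D_r$ to the stochastic integrals, and keep careful track of the lower limits of integration and of the indicator functions arising simultaneously from the delay $\tau$ and from the support constraints $r\leq s$, $\theta\leq s$, $s-\tau\geq r$, $s-\tau\geq\theta$. Once the equation is written in the form above, the rest of the argument is routine and parallels Lemma \ref{lem2}. Finally, note that this bound is precisely what is needed to control the term $E\big(\int_0^T\int_0^T|D_\theta D_rX_\tau(t)|^2\,d\theta\,dr\big)^2$ appearing in Lemma \ref{lm2.1}, since by Jensen's inequality this quantity is at most $T^4\,\sup_{\theta,r}E|D_\theta D_rX_\tau(t)|^4$.
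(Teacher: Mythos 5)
Your proposal is correct and follows essentially the same route as the paper: differentiate the (differentiable-coefficient) Volterra equation for $D_\theta X_\tau(t)$ once more, observe that the resulting source term is bounded in $L^4$ via the boundedness of the first and second partials together with the moment bounds of Lemma \ref{lem2}, fold the delayed term into the undelayed one, and conclude by Gronwall. The only cosmetic difference is that you invoke the symmetry $D_rD_\theta=D_\theta D_r$ to assume $r\geq\theta$ and kill the term $D_r[\sigma(\theta,X_\tau(\theta),X_\tau(\theta-\tau))]$, whereas the paper keeps that (bounded) term and simply integrates from $\theta\vee r$.
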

\begin{proof} We rewrite the equations (\ref{rem1.1}) and (\ref{rem1.2}) as follows
\begin{align}
D_\theta& X_{\tau}(t)=\sigma(\theta,X_{\tau}(\theta),X_{\tau}(\theta-\tau))
+\int_\theta^t b_2'(s,X_{\tau}(s),X_{\tau}(s-\tau))D_\theta X_{\tau}(s)ds\notag\\
&+\int_{\theta}^t b_3'(s,X_{\tau}(s),X_{\tau}(s-\tau))D_\theta X_{\tau}(s-\tau)\ind_{[\theta+\tau,t]}(s)ds+\int_\theta^t\sigma_2'(s,X_{\tau}(s),X_{\tau}(s-\tau))D_\theta X_{\tau}(s)dB(s)\notag\\
&+\int_{\theta}^t\sigma_3'(s,X_{\tau}(s),X_{\tau}(s-\tau))D_\theta X_{\tau}(s-\tau)\ind_{[\theta+\tau,t]}(s)dB(s),\,\,0\leq \theta\leq t\leq T.\notag
\end{align}
Hence, the second order Malliavin derivative of $X_{\tau}(t)$ can be computed by
\begin{align}
&D_rD_\theta X_{\tau}(t)=D_r[\sigma(\theta,X_{\tau}(\theta),X_{\tau}(\theta-\tau))]+\sigma_2'(r,X_{\tau}(r),X_{\tau}(r-\tau))D_\theta X_{\tau}(r)\notag\\
&+\sigma_3'(r,X_{\tau}(r),X_{\tau}(r-\tau))D_\theta X_{\tau}(r-\tau)\ind_{[\theta+\tau,t]}(r)+\int_{\theta \vee r}^t D_r[b_2'(s,X_{\tau}(s),X_{\tau}(s-\tau))]D_\theta X_{\tau}(s)ds\notag\\
&+\int_{\theta \vee r}^t b_2'(s,X_{\tau}(s),X_{\tau}(s-\tau))D_rD_\theta X_{\tau}(s)ds+\int_{\theta \vee r}^t D_r[b_3'(s,X_{\tau}(s),X_{\tau}(s-\tau))]D_\theta X_{\tau}(s-\tau)\ind_{[\theta+\tau,t]}(s)ds\notag\\
&+\int_{\theta \vee r}^t b_3'(s,X_{\tau}(s),X_{\tau}(s-\tau))D_rD_\theta X_{\tau}(s-\tau)\ind_{[\theta+\tau,t]}(s)ds+\int_{\theta \vee r}^tD_r[\sigma_2'(s,X_{\tau}(s),X_{\tau}(s-\tau))]D_\theta X_{\tau}(s)dB(s)\notag\\
&+\int_{\theta \vee r}^t\sigma_2'(s,X_{\tau}(s),X_{\tau}(s-\tau))D_rD_\theta X_{\tau}(s)dB(s)+\int_{\theta \vee r}^tD_r[\sigma_3'(s,X_{\tau}(s),X_{\tau}(s-\tau))]D_\theta X_{\tau}(s-\tau)\ind_{[\theta+\tau,t]}(s)dB(s)\notag\\
&+\int_{\theta \vee r}^t\sigma_3'(s,X_{\tau}(s),X_{\tau}(s-\tau))D_rD_\theta X_{\tau}(s-\tau)\ind_{[\theta+\tau,t]}(s)dB(s),\,\,0\leq r,\theta\leq t\leq T,\notag
\end{align}
where, for $h\in \{b_2',b_3',\sigma_2',\sigma_3'\},$ we have
$$D_{r}\left[h(s,X_{\tau}(s),X_{\tau}(s-\tau))\right] = h_2'(s,X_{\tau}(s),X_{\tau}(s-\tau))D_r X_{\tau}(s)+ h_3'(s,X_{\tau}(s),X_{\tau}(s-\tau))D_{r} X_{\tau}(s-\tau).$$
Note that the partial derivatives of $b$ and $\sigma$ are bounded. By using (\ref{lem2.1}) and  the H\"{o}lder and Burkholder-Davis-Gundy inequalities, we verify that
\begin{align*}
E|D_{r}D_{\theta}X_{\tau}(t)|^{4}&\leq C_{T,L}+ C_{T,L}\int_{\theta \vee r}^{t}E|D_{r}D_{\theta}X_{\tau}(s)|^{4}ds+ C_{T,L}\int_{\theta \vee r}^{t}E|D_{r}D_{\theta}X_{\tau}(s-\tau)|^{4}ds\\
&\leq C_{T,L}+ C_{T,L}\int_{\theta \vee r}^{t}E|D_{r}D_{\theta}X_{\tau}(s)|^{4}ds,\,\,0\leq r,\theta\leq t\leq T,
\end{align*}
which, by Gronwall's lemma, gives us the desired conclusion.
\end{proof}

\subsection{$L^p$-distances and weak convergence}
In this Subsection, we consider the equations (\ref{cara04}) and (\ref{cara05}). We first bound $L^p$-distances between the solutions and between their Malliavin derivatives. Then, we use Lemma \ref{lm2.1} to estimate the weak rate of  convergence. In the whole subsection, we assume without loss of generality that $\tau_1<\tau_2.$
\begin{lem}\label{ujl9s}Let Assumptions \ref{assum1} hold and let $h:[0,T]\times \mathbb{R}^2\to \mathbb{R}$ be such that
$$|h(t,x_1,y_1)-h(t,x_2,y_2)|\leq L(|x_1-x_2|+|y_1-y_2|)$$
for all $t\in [0,T],x_1,x_2,y_1,y_2\in \mathbb{R}.$ Then, for every $p\geq 2$ and for all $t\in[0,T],$
\begin{align}
&\int_{0}^{t}E|h(s,X_{\tau_1}(s),X_{\tau_1}(s-\tau_1))-h(s,X_{\tau_2}(s),X_{\tau_2}(s-\tau_2))|^pds\notag\\
&\leq  C_{p,L,T}\bigg(t|\tau_{1}-\tau_{2}|^{p/2}+\int_{0}^{t\wedge \tau_1} |\varphi(s-\tau_1)-\varphi(s-\tau_2)|^{p}ds
+\int_{t\wedge \tau_1}^{t\wedge \tau_2} |\varphi(0)-\varphi(s-\tau_2)|^{p}ds\nonumber\notag\\
	&\hspace{8.8cm}+\int_{0}^{t}E|X_{\tau_{1}}(s)-X_{\tau_{2}}(s)|^{p}ds\bigg),\label{mpt2a}
\end{align}
where $C_{p,L,T}$ is a positive constant.
\end{lem}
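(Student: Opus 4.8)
The plan is to reduce the claim, by the Lipschitz hypothesis on $h$, to $L^p$-estimates for the difference of the two solutions, and then to resolve the delay mismatch $X_{\tau_1}(s-\tau_1)$ versus $X_{\tau_2}(s-\tau_2)$ by splitting $[0,t]$ according to the position of $s-\tau_1$ and $s-\tau_2$ relative to the history window $(-\infty,0]$. All the moment bounds needed are already supplied by Lemma \ref{lem1}.

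First, by the Lipschitz property of $h$ and the elementary inequality $(a+b)^p\le 2^{p-1}(a^p+b^p)$ (valid since $p\geq 2$),
\begin{align*}
&E|h(s,X_{\tau_1}(s),X_{\tau_1}(s-\tau_1))-h(s,X_{\tau_2}(s),X_{\tau_2}(s-\tau_2))|^p\\
&\qquad\le 2^{p-1}L^p\Big(E|X_{\tau_1}(s)-X_{\tau_2}(s)|^p+E|X_{\tau_1}(s-\tau_1)-X_{\tau_2}(s-\tau_2)|^p\Big).
\end{align*}
Integrating over $s\in[0,t]$, the first summand contributes exactly the term $\int_0^t E|X_{\tau_1}(s)-X_{\tau_2}(s)|^p\,ds$ on the right-hand side of (\ref{mpt2a}), so it remains to estimate $J:=\int_0^t E|X_{\tau_1}(s-\tau_1)-X_{\tau_2}(s-\tau_2)|^p\,ds$.

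Recalling $\tau_1<\tau_2$, I would write $[0,t]=[0,t\wedge\tau_1]\cup[t\wedge\tau_1,t\wedge\tau_2]\cup[t\wedge\tau_2,t]$, any of these being possibly empty (degenerate to a point). On $[0,t\wedge\tau_1]$ both $s-\tau_1$ and $s-\tau_2$ are nonpositive, so $X_{\tau_i}(s-\tau_i)=\varphi(s-\tau_i)$ and the corresponding part of $J$ is precisely $\int_0^{t\wedge\tau_1}|\varphi(s-\tau_1)-\varphi(s-\tau_2)|^p\,ds$. On $[t\wedge\tau_1,t\wedge\tau_2]$ we have $0\le s-\tau_1\le\tau_2-\tau_1$ and $s-\tau_2\le 0$, so $X_{\tau_2}(s-\tau_2)=\varphi(s-\tau_2)$ while $X_{\tau_1}(s-\tau_1)$ is a genuine solution value; writing $X_{\tau_1}(s-\tau_1)-\varphi(s-\tau_2)=\big(X_{\tau_1}(s-\tau_1)-X_{\tau_1}(0)\big)+\big(\varphi(0)-\varphi(s-\tau_2)\big)$ and applying (\ref{9hjd1}) with $|s-\tau_1|\le|\tau_1-\tau_2|$ bounds the expectation of $|X_{\tau_1}(s-\tau_1)-X_{\tau_1}(0)|^p$ by $C_{p,L,T}|\tau_1-\tau_2|^{p/2}$, so that (after integrating a constant over a subinterval of length $\le t$) this region contributes at most $C_{p,L,T}\big(t|\tau_1-\tau_2|^{p/2}+\int_{t\wedge\tau_1}^{t\wedge\tau_2}|\varphi(0)-\varphi(s-\tau_2)|^p\,ds\big)$. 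Finally on $[t\wedge\tau_2,t]$ both $s-\tau_1$ and $s-\tau_2$ are nonnegative; inserting $\pm X_{\tau_1}(s-\tau_2)$, estimate (\ref{9hjd1}) bounds $E|X_{\tau_1}(s-\tau_1)-X_{\tau_1}(s-\tau_2)|^p$ by $C_{p,L,T}|\tau_1-\tau_2|^{p/2}$, while the change of variables $u=s-\tau_2$ turns $\int_{t\wedge\tau_2}^t E|X_{\tau_1}(s-\tau_2)-X_{\tau_2}(s-\tau_2)|^p\,ds$ into $\int_0^{(t-\tau_2)\vee 0}E|X_{\tau_1}(u)-X_{\tau_2}(u)|^p\,du\le\int_0^t E|X_{\tau_1}(u)-X_{\tau_2}(u)|^p\,du$.

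Summing the three contributions to $J$, adding the already-identified first summand, and absorbing all numerical constants into $C_{p,L,T}$ yields (\ref{mpt2a}). There is no genuine analytic difficulty here; the only thing requiring care is the bookkeeping of the three regions — in particular noticing that on the middle region $X_{\tau_2}$ is still frozen at its history $\varphi$ while $X_{\tau_1}$ has already started to move, which forces the detour through $\varphi(0)=X_{\tau_1}(0)$ — and correctly tracking the time shift $t\mapsto t-\tau_2$ in the last change of variables so that the bound still only involves $\int_0^t E|X_{\tau_1}(s)-X_{\tau_2}(s)|^p\,ds$.
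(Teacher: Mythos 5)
Your proposal is correct and follows essentially the same route as the paper: reduce via the Lipschitz bound to $\int_0^t E|X_{\tau_1}(s)-X_{\tau_2}(s)|^p ds$ plus the shifted term $J$, then split $[0,t]$ at $t\wedge\tau_1$ and $t\wedge\tau_2$, using the shared initial data and the $L^p$-H\"older estimate (\ref{9hjd1}) on each piece. The only cosmetic difference is that the paper inserts the single intermediate term $\pm X_{\tau_2}(s-\tau_1)$ once globally before splitting, whereas you choose the intermediate point region by region; the resulting bounds are identical.
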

\begin{proof}We have
\begin{align}
\int_{0}^{t}E|h(s,&X_{\tau_1}(s),X_{\tau_1}(s-\tau_1))-h(s,X_{\tau_2}(s),X_{\tau_2}(s-\tau_2))|^pds\notag\\
&\leq L^{p}\int_{0}^{t}E(|X_{\tau_{1}}(s)-X_{\tau_{2}}(s)| +|X_{\tau_{1}}(s-\tau_{1})-X_{\tau_{2}}(s-\tau_{2})|)^{p}ds\notag\\
&\leq L^{p}2^{p-1}\int_{0}^{t}(E|X_{\tau_{1}}(s)-X_{\tau_{2}}(s)|^{p} +E|X_{\tau_{1}}(s-\tau_{1})-X_{\tau_{2}}(s-\tau_{2})|^{p})ds\notag\\
&=L^{p}2^{p-1}M_p(t)+L^{p}2^{p-1}\int_{0}^{t}E|X_{\tau_{1}}(s)-X_{\tau_{2}}(s)|^{p}ds,\,\,\,t\in[0,T],\label{mpt2n}
\end{align}
where
\begin{align*}
	M_p(t)&:=\int_{0}^{t}E|X_{\tau_{1}}(s-\tau_{1})-X_{\tau_{2}}(s-\tau_{2})|^{p}ds\nonumber\\
	&\leq 2^{p-1}\int_{0}^{t} (E|X_{\tau_{1}}(s-\tau_{1})-X_{\tau_{2}}(s-\tau_{1})|^{p}+E|X_{\tau_{2}}(s-\tau_{1})-X_{\tau_{2}}(s-\tau_{2})|^{p})ds.
\end{align*}
Since $X_{\tau_1}(s)=X_{\tau_2}(s)=\varphi(s),s\in[-\tau_1,0],$ it holds that
\begin{align*}
\int_{0}^{t}&E|X_{\tau_{1}}(s-\tau_{1})-X_{\tau_{2}}(s-\tau_{1})|^{p}ds\\
&=\int_{0}^{t\wedge \tau_1}E|X_{\tau_{1}}(s-\tau_{1})-X_{\tau_{2}}(s-\tau_{1})|^{p}ds+\int_{t\wedge \tau_1}^t E|X_{\tau_{1}}(s-\tau_{1})-X_{\tau_{2}}(s-\tau_{1})|^{p}ds\\
&=\int_{t\wedge \tau_1}^t E|X_{\tau_{1}}(s-\tau_{1})-X_{\tau_{2}}(s-\tau_{1})|^{p}ds=\int_{0}^{t-\tau_1} E|X_{\tau_{1}}(s)-X_{\tau_{2}}(s)|^{p}ds\\
&\leq \int_{0}^{t}E|X_{\tau_{1}}(s)-X_{\tau_{2}}(s)|^{p}ds,\,\,\,t\in[0,T].
\end{align*}
On the other hand, we have
\begin{align*}
&\int_{0}^{t} E|X_{\tau_2}(s-\tau_1)-X_{\tau_2}(s-\tau_2)|^{p}ds= \int_{0}^{t\wedge \tau_1} E|X_{\tau_2}(s-\tau_1)-X_{\tau_2}(s-\tau_2)|^{p}ds\\
&+\int_{t\wedge \tau_1}^{t\wedge \tau_2} E|X_{\tau_2}(s-\tau_1)-X_{\tau_2}(s-\tau_2)|^{p}ds+\int_{t\wedge \tau_2}^{t} E|X_{\tau_2}(s-\tau_1)-X_{\tau_2}(s-\tau_2)|^{p}ds,\,\,\,t\in[0,T].
\end{align*}
Then, recalling (\ref{9hjd1}), we obtain
\begin{align*}
&\int_{0}^{t} E|X_{\tau_2}(s-\tau_1)-X_{\tau_2}(s-\tau_2)|^{p}ds\\
&\leq \int_{0}^{t\wedge \tau_1} |\varphi(s-\tau_1)-\varphi(s-\tau_2)|^{p}ds+2^{p-1}\int_{t\wedge \tau_1}^{t\wedge \tau_2} E|X_{\tau_2}(s-\tau_1)-\varphi(0)|^{p}ds\\
&+2^{p-1}\int_{t\wedge \tau_1}^{t\wedge \tau_2} |\varphi(0)-X_{\tau_2}(s-\tau_2)|^{p}ds+\int_{t\wedge \tau_2}^{t} E|X_{\tau_2}(s-\tau_1)-X_{\tau_2}(s-\tau_2)|^{p}ds\\
&\leq \int_{0}^{t\wedge \tau_1} |\varphi(s-\tau_1)-\varphi(s-\tau_2)|^{p}ds+C_{p,L,T}\int_{t\wedge \tau_1}^{t\wedge \tau_2} (s-\tau_1)^{p/2}ds\\
&+2^{p-1}\int_{t\wedge \tau_1}^{t\wedge \tau_2} |\varphi(0)-\varphi(s-\tau_2)|^{p}ds+C_{p,L,T}\int_{t\wedge \tau_2}^{t} (\tau_{2}-\tau_{1})^{p/2}ds\\
&\leq C_{p,L,T}\bigg(t|\tau_{1}-\tau_{2}|^{p/2}+\int_{0}^{t\wedge \tau_1} |\varphi(s-\tau_1)-\varphi(s-\tau_2)|^{p}ds
+\int_{t\wedge \tau_1}^{t\wedge \tau_2} |\varphi(0)-\varphi(s-\tau_2)|^{p}ds\bigg)
\end{align*}
Hence,
\begin{align}
	M_p(t)&\leq  C_{p,L,T}\bigg(t|\tau_{1}-\tau_{2}|^{p/2}+\int_{0}^{t\wedge \tau_1} |\varphi(s-\tau_1)-\varphi(s-\tau_2)|^{p}ds
+\int_{t\wedge \tau_1}^{t\wedge \tau_2} |\varphi(0)-\varphi(s-\tau_2)|^{p}ds\nonumber\\
	&\hspace{6cm}+\int_{0}^{t}E|X_{\tau_{1}}(s)-X_{\tau_{2}}(s)|^{p}ds\bigg),\,\,\,t\in[0,T].\label{mpt2}
\end{align}
As a consequence, we obtain (\ref{mpt2a}) by inserting (\ref{mpt2}) into (\ref{mpt2n}). The proof of the lemma is complete.
\end{proof}
\begin{prop}\label{them1} Let Assumption \ref{assum1} hold. Then, for every $p\geq 2$ and for all $t\in[0,T],$ there exists a positive constants $C_{p,L,T}$ such that
\begin{align}
E|X_{\tau_1}(t)-X_{\tau_2}(t)|^p&\leq  C_{p,L,T}t^{\frac{p}{2}-1}\bigg(t|\tau_{1}-\tau_{2}|^{p/2}\notag\\
&+\int_{0}^{t\wedge \tau_1} |\varphi(s-\tau_1)-\varphi(s-\tau_2)|^{p}ds
+\int_{t\wedge \tau_1}^{t\wedge \tau_2} |\varphi(0)-\varphi(s-\tau_2)|^{p}ds\bigg).\label{3.them1}
\end{align}
\end{prop}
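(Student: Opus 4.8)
The plan is to subtract the integral equations \eqref{cara04} and \eqref{cara05}, estimate the resulting drift and diffusion parts by the standard moment inequalities, feed the two integrands into Lemma \ref{ujl9s}, and close the resulting linear integral inequality by Gronwall's lemma. To this end, fix $p\ge2$ and $t\in[0,T]$; subtracting the two equations gives
\[
X_{\tau_1}(t)-X_{\tau_2}(t)=\int_0^t\bigl(b(s,X_{\tau_1}(s),X_{\tau_1}(s-\tau_1))-b(s,X_{\tau_2}(s),X_{\tau_2}(s-\tau_2))\bigr)\,ds+\int_0^t\bigl(\sigma(s,X_{\tau_1}(s),X_{\tau_1}(s-\tau_1))-\sigma(s,X_{\tau_2}(s),X_{\tau_2}(s-\tau_2))\bigr)\,dB(s).
\]
Applying $(a+b)^p\le 2^{p-1}(a^p+b^p)$, estimating the drift integral by H\"older's inequality (which produces a factor $t^{p-1}$) and the stochastic integral by the Burkholder--Davis--Gundy inequality followed once more by H\"older with exponent $p/2\ge1$ (which produces $t^{p/2-1}$), and using $t^{p-1}\le T^{p/2}t^{p/2-1}$ for $t\in[0,T]$ to collect both contributions under a single power of $t$, one obtains, with $h$ denoting either $b$ or $\sigma$,
\[
E|X_{\tau_1}(t)-X_{\tau_2}(t)|^p\le C_{p,L,T}\,t^{p/2-1}\sum_{h\in\{b,\sigma\}}\int_0^t E\bigl|h(s,X_{\tau_1}(s),X_{\tau_1}(s-\tau_1))-h(s,X_{\tau_2}(s),X_{\tau_2}(s-\tau_2))\bigr|^p\,ds.
\]

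Next, since $b$ and $\sigma$ are Lipschitz with constant $L$ (Assumption \ref{assum1}), Lemma \ref{ujl9s} applies to both inner integrals and bounds each of them by
\[
C_{p,L,T}\Bigl(t|\tau_1-\tau_2|^{p/2}+\int_0^{t\wedge\tau_1}|\varphi(s-\tau_1)-\varphi(s-\tau_2)|^p\,ds+\int_{t\wedge\tau_1}^{t\wedge\tau_2}|\varphi(0)-\varphi(s-\tau_2)|^p\,ds+\int_0^t E|X_{\tau_1}(s)-X_{\tau_2}(s)|^p\,ds\Bigr).
\]
Set $u(t):=E|X_{\tau_1}(t)-X_{\tau_2}(t)|^p$, which is finite and bounded on $[0,T]$ by Lemma \ref{lem1}, and let $\phi(t)$ be $C_{p,L,T}\,t^{p/2-1}$ times the sum of the first three terms on the right-hand side above. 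Bounding $t^{p/2-1}\le\max\{1,T^{p/2-1}\}$ only in front of the remaining self-referential term, I would arrive at the integral inequality
\[
u(t)\le\phi(t)+C_{p,L,T}\int_0^t u(s)\,ds,\qquad t\in[0,T].
\]

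The last step is the integral form of Gronwall's lemma. The only point that requires a little care — and which I would regard as the main, though minor, obstacle — is that one must check that $\phi$ is non-decreasing on $[0,T]$ in order to conclude $u(t)\le\phi(t)\,e^{C_{p,L,T}t}$. This holds because $t\mapsto t^{p/2-1}$ is non-decreasing for $p\ge2$ and each of the three bracketed terms is non-negative and non-decreasing in $t$: in particular the third one equals $0$ for $t\le\tau_1$, increases on $[\tau_1,\tau_2]$, and is constant for $t\ge\tau_2$. Absorbing the factor $e^{C_{p,L,T}T}$ into the constant then gives precisely \eqref{3.them1}. The explicit bookkeeping of the constants in the H\"older and Burkholder--Davis--Gundy steps, together with the (routine) verification that $u$ is integrable, I would not carry out in detail.
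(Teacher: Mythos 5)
Your proposal is correct and follows essentially the same route as the paper: decompose the difference into drift and diffusion integrals, bound them via H\"older and Burkholder--Davis--Gundy to reach the $t^{p/2-1}$ prefactor, invoke Lemma \ref{ujl9s} for both integrands, and close with the Gronwall-type lemma after verifying that the forcing term is non-decreasing (the paper makes exactly this monotonicity remark before citing Theorem 1.4.2 of Pachpatte). No gaps.
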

\begin{proof} We write
\begin{align}\label{prthem3.1}
X_{\tau_1}(t)-X_{\tau_2}(t)=I_1(t)+I_2(t),\,\,\,0\leq t\leq T,
\end{align}
where the terms $I_1(t),I_2(t)$ are defined by
\begin{align*}
&I_1(t):=\int_0^t [b(s,X_{\tau_1}(s),X_{\tau_1}(s-\tau_1))-b(s,X_{\tau_2}(s),X_{\tau_2}(s-\tau_2))]ds,\\
&I_2(t):=\int_0^t[\sigma(s,X_{\tau_1}(s),X_{\tau_1}(s-\tau_1))-\sigma(s,X_{\tau_2}(s),X_{\tau_2}(s-\tau_2))]dB(s).
\end{align*}
Using the H\"{o}lder inequality and Lemma \ref{ujl9s}, we deduce
\begin{align}
E|I_1(t)|^p&\leq t^{p-1}\int_{0}^{t}E|b(s,X_{\tau_1}(s),X_{\tau_1}(s-\tau_1))-b(s,X_{\tau_2}(s),X_{\tau_2}(s-\tau_2))|^pds\notag\\
&\leq C_{p,L,T}t^{p-1}\left(t|\tau_{1}-\tau_{2}|^{p/2}+\int_{0}^{t\wedge \tau_1} |\varphi(s-\tau_1)-\varphi(s-\tau_2)|^{p}ds
+\int_{t\wedge \tau_1}^{t\wedge \tau_2} |\varphi(0)-\varphi(s-\tau_2)|^{p}ds\right)\nonumber\\
& \ \ +C_{p,L,T}\int_{0}^{t}E|X_{\tau_{1}}(s)-X_{\tau_{2}}(s)|^{p}ds,\,\,\,0\leq t\leq T.\label{3.03}
\end{align}
Similarly, we also have
	\begin{align}\label{3.04}
	E|I_2(t)|^p&\leq C_{p,L,T}  E\left(\int_{0}^{t}|\sigma(s,X_{\tau_1}(s),X_{\tau_1}(s-\tau_1))-\sigma(s,X_{\tau_2}(s),X_{\tau_2}(s-\tau_2))|^{2}ds\right)^{p/2}\nonumber\\
	&\leq  C_{p,L,T} t^{\frac{p}{2}-1}\int_{0}^{t}E|\sigma(s,X_{\tau_1}(s),X_{\tau_1}(s-\tau_1))-\sigma(s,X_{\tau_2}(s),X_{\tau_2}(s-\tau_2))|^{p}ds\nonumber\\
	&\leq C_{p,L,T}t^{\frac{p}{2}-1}\left(t|\tau_{1}-\tau_{2}|^{p/2}+\int_{0}^{t\wedge \tau_1} |\varphi(s-\tau_1)-\varphi(s-\tau_2)|^{p}ds
+\int_{t\wedge \tau_1}^{t\wedge \tau_2} |\varphi(0)-\varphi(s-\tau_2)|^{p}ds\right) \nonumber\\
	& \ \ +C_{p,L,T}\int_{0}^{t}E|X_{\tau_{1}}(s)-X_{\tau_{2}}(s)|^{p}ds,\,\,\,0\leq t\leq T.
	\end{align}
Combining \eqref{prthem3.1}, \eqref{3.03} and \eqref{3.04}, we obtain
\begin{align*}
E|X_{\tau_1}(t)&-X_{\tau_2}(t)|^p\\
&\leq C_{p,L,T}t^{\frac{p}{2}-1}\left(t|\tau_{1}-\tau_{2}|^{p/2}+\int_{0}^{t\wedge \tau_1} |\varphi(s-\tau_1)-\varphi(s-\tau_2)|^{p}ds
+\int_{t\wedge \tau_1}^{t\wedge \tau_2} |\varphi(0)-\varphi(s-\tau_2)|^{p}ds\right) \nonumber\\
&\ \ +C_{p,L,T}\int_{0}^{t}E|X_{\tau_{1}}(s)-X_{\tau_{2}}(s)|^{p}ds,\,\,\,0\leq t\leq T.
\end{align*}
Since the function $t\mapsto t|\tau_{1}-\tau_{2}|^{p/2}+\int_{0}^{t\wedge \tau_1} |\varphi(s-\tau_1)-\varphi(s-\tau_2)|^{p}ds
+\int_{t\wedge \tau_1}^{t\wedge \tau_2} |\varphi(0)-\varphi(s-\tau_2)|^{p}ds$ is non-decreasing, we can use the Gronwall-type lemma (see, Theorem 1.4.2 in \cite{Pachpatte1998}) to get
\begin{align*}
&E|X_{\tau_1}(t)-X_{\tau_2}(t)|^p\\
&\leq C_{p,L,T}t^{\frac{p}{2}-1}\left(t|\tau_{1}-\tau_{2}|^{p/2}+\int_{0}^{t\wedge \tau_1} |\varphi(s-\tau_1)-\varphi(s-\tau_2)|^{p}ds
+\int_{t\wedge \tau_1}^{t\wedge \tau_2} |\varphi(0)-\varphi(s-\tau_2)|^{p}ds\right)e^{C_{p,L,T} t}\\
&\leq C_{p,L,T}t^{\frac{p}{2}-1}\left(t|\tau_{1}-\tau_{2}|^{p/2}+\int_{0}^{t\wedge \tau_1} |\varphi(s-\tau_1)-\varphi(s-\tau_2)|^{p}ds
+\int_{t\wedge \tau_1}^{t\wedge \tau_2} |\varphi(0)-\varphi(s-\tau_2)|^{p}ds\right).
\end{align*}
The proof of Proposition is complete.
\end{proof}
\begin{cor}\label{imld5k} Under the assumptions of Lemma \ref{ujl9s}, we have
\begin{align*}
&\int_{0}^{t}E|h(s,X_{\tau_1}(s),X_{\tau_1}(s-\tau_1))-h(s,X_{\tau_2}(s),X_{\tau_2}(s-\tau_2))|^pds\\
&\leq  C_{p,L,T}\bigg(t|\tau_{1}-\tau_{2}|^{p/2}+\int_{0}^{t\wedge \tau_1} |\varphi(s-\tau_1)-\varphi(s-\tau_2)|^{p}ds
+\int_{t\wedge \tau_1}^{t\wedge \tau_2} |\varphi(0)-\varphi(s-\tau_2)|^{p}ds\bigg)\\
&\leq  C_{p,L,T}\bigg(|\tau_{1}-\tau_{2}|^{p/2}+\int_{0}^{t\wedge \tau_1} |\varphi(s-\tau_1)-\varphi(s-\tau_2)|^{p}ds
+\int_{t\wedge \tau_1}^{t\wedge \tau_2} |\varphi(0)-\varphi(s-\tau_2)|^{p}ds\bigg)
\end{align*}
for every $p\geq 2$ and for all $t\in[0,T],$ where $C_{p,L,T}$ is a positive constant.
\end{cor}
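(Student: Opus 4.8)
The plan is to deduce Corollary \ref{imld5k} by plugging the bound of Proposition \ref{them1} into the estimate (\ref{mpt2a}) of Lemma \ref{ujl9s}. Indeed, (\ref{mpt2a}) already delivers the desired right-hand side except for the additive term $C_{p,L,T}\int_0^t E|X_{\tau_1}(s)-X_{\tau_2}(s)|^p\,ds$, so it suffices to show that this term is itself controlled by $C_{p,L,T}\big(t|\tau_1-\tau_2|^{p/2}+\int_0^{t\wedge\tau_1}|\varphi(s-\tau_1)-\varphi(s-\tau_2)|^p\,ds+\int_{t\wedge\tau_1}^{t\wedge\tau_2}|\varphi(0)-\varphi(s-\tau_2)|^p\,ds\big)$.

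First I would insert (\ref{3.them1}) with $t$ replaced by $s$ and integrate in $s$ over $[0,t]$. Two elementary observations keep the resulting integral harmless. One: the prefactor $s^{p/2-1}$ is bounded on $[0,T]$ by a constant depending only on $p$ and $T$ (for $p=2$ it equals $1$; for $p>2$ it is increasing, hence $\le T^{p/2-1}$), and $\int_0^t s^{p/2-1}\,ds=\frac{2}{p}t^{p/2}\le \frac{2}{p}T^{p/2}$. Two: the function
$$s\longmapsto s|\tau_1-\tau_2|^{p/2}+\int_0^{s\wedge\tau_1}|\varphi(u-\tau_1)-\varphi(u-\tau_2)|^p\,du+\int_{s\wedge\tau_1}^{s\wedge\tau_2}|\varphi(0)-\varphi(u-\tau_2)|^p\,du$$
is non-decreasing in $s$ (each summand is: the first is linear, the second has a non-decreasing upper limit, and the third vanishes for $s\le\tau_1$, is increasing for $\tau_1\le s\le\tau_2$, and is constant for $s\ge\tau_2$). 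Hence it may be bounded by its value at $s=t$ and pulled out of the $s$-integral, giving
$$\int_0^t E|X_{\tau_1}(s)-X_{\tau_2}(s)|^p\,ds\le C_{p,L,T}\Big(t|\tau_1-\tau_2|^{p/2}+\int_0^{t\wedge\tau_1}|\varphi(s-\tau_1)-\varphi(s-\tau_2)|^p\,ds+\int_{t\wedge\tau_1}^{t\wedge\tau_2}|\varphi(0)-\varphi(s-\tau_2)|^p\,ds\Big).$$

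Feeding this back into (\ref{mpt2a}) yields the first displayed inequality of the corollary. For the second inequality I would only use $t\le T$, so that $t|\tau_1-\tau_2|^{p/2}\le T|\tau_1-\tau_2|^{p/2}$, and absorb $T$ into the generic constant $C_{p,L,T}$. There is no real obstacle here: the statement is a bookkeeping corollary, and the only point requiring a word of care is the joint use of the integrability of $s^{p/2-1}$ near $0$ and its boundedness near $T$, both of which are immediate for $p\ge 2$.
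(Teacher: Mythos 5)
Your proposal is correct and follows exactly the route the paper intends: its proof of Corollary \ref{imld5k} is the one-line remark that it ``follows directly from Lemma \ref{ujl9s} and Proposition \ref{them1}'', and you have simply supplied the (correct) bookkeeping — substituting (\ref{3.them1}) into the residual term of (\ref{mpt2a}), using the monotonicity of the bracketed function and the boundedness of $s^{p/2-1}$ on $[0,T]$ for $p\geq 2$, and then $t\leq T$ for the final inequality. No gaps.
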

\begin{proof}Follows directly from Lemma \ref{ujl9s} and Proposition \ref{them1}.
\end{proof}
Next, we estimate the distance between the Malliavin derivatives. For this purpose, we set
\begin{align*}
&I_{1,\tau}(\theta,t):=\int_\theta^t b_2'(s,X_{\tau}(s),X_{\tau}(s-\tau))D_\theta X_{\tau}(s)ds,\\
&I_{2,\tau}(\theta,t):=\int_{\theta+\tau}^t b_3'(s,X_{\tau}(s),X_{\tau}(s-\tau))D_\theta X_{\tau}(s-\tau)ds,\\
&J_{1,\tau}(\theta,t):=\int_\theta^t\sigma_2'(s,X_{\tau}(s),X_{\tau}(s-\tau))D_\theta X_{\tau}(s)dB(s),\\
&J_{2,\tau}(\theta,t):=\int_{\theta+\tau}^t\sigma_3'(s,X_{\tau}(s),X_{\tau}(s-\tau))D_\theta X_{\tau}(s-\tau)dB(s),
\end{align*}
where $(X_{\tau}(t))_{t\in[-\tau,T]}$ is the solution to the equation (\ref{eq2}).
\begin{lem}\label{jfks1}Let Assumptions \ref{assum1} and \ref{assum5} hold. We have
\begin{align}
&E|I_{1,\tau_1}(\theta,t)-I_{1,\tau_2}(\theta,t)|^2+E|J_{1,\tau_1}(\theta,t)-J_{1,\tau_2}(\theta,t)|^2\notag\\
&\leq C_{L,T}\left(|\tau_{1}-\tau_{2}|^{2}+\int_{0}^{t\wedge \tau_1} |\varphi(s-\tau_1)-\varphi(s-\tau_2)|^{4}ds
+\int_{t\wedge \tau_1}^{t\wedge \tau_2} |\varphi(0)-\varphi(s-\tau_2)|^{4}ds\right)^{1/2}\nonumber\\
&+C_{L,T}\int_{\theta}^{t} E|D_\theta X_{\tau_1}(s)-D_\theta X_{\tau_2}(s)|^2ds,\,\,0\leq \theta\leq t\leq T,\label{prlem3.1}
\end{align}
where $C_{L,T}$ is a positive constant.
\end{lem}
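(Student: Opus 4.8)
The strategy is to compare, term by term, the expressions for $I_{1,\tau_1}-I_{1,\tau_2}$ and $J_{1,\tau_1}-J_{1,\tau_2}$, splitting each difference into a part that comes from replacing the coefficient/integrand evaluated along $X_{\tau_1}$ by the same object evaluated along $X_{\tau_2}$, and a part that comes from the difference of the Malliavin derivatives $D_\theta X_{\tau_1}-D_\theta X_{\tau_2}$. Concretely, for the drift term I would write
\begin{align*}
I_{1,\tau_1}(\theta,t)-I_{1,\tau_2}(\theta,t)&=\int_\theta^t\big(b_2'(s,X_{\tau_1}(s),X_{\tau_1}(s-\tau_1))-b_2'(s,X_{\tau_2}(s),X_{\tau_2}(s-\tau_2))\big)D_\theta X_{\tau_1}(s)\,ds\\
&\quad+\int_\theta^t b_2'(s,X_{\tau_2}(s),X_{\tau_2}(s-\tau_2))\big(D_\theta X_{\tau_1}(s)-D_\theta X_{\tau_2}(s)\big)\,ds,
\end{align*}
and analogously for $J_{1,\tau_1}-J_{1,\tau_2}$ with the stochastic integral. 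Then I apply the elementary inequality $(a+b)^2\le 2a^2+2b^2$, the Hölder inequality for the drift term, and the Burkholder–Davis–Gundy inequality for the diffusion term.

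**Estimating the two pieces.** For the second piece in each decomposition, the partial derivatives $b_2'$ and $\sigma_2'$ are bounded by $L$ (Assumption \ref{assum5}), so Hölder (resp. BDG) yields a bound of the form $C_{L,T}\int_\theta^t E|D_\theta X_{\tau_1}(s)-D_\theta X_{\tau_2}(s)|^2\,ds$, which is exactly the last term on the right-hand side of \eqref{prlem3.1}. For the first piece, I use that $b_2'$ and $\sigma_2'$ are themselves Lipschitz in their spatial arguments (again Assumption \ref{assum5}, since the second derivatives are bounded), so
$$|b_2'(s,X_{\tau_1}(s),X_{\tau_1}(s-\tau_1))-b_2'(s,X_{\tau_2}(s),X_{\tau_2}(s-\tau_2))|\le L\big(|X_{\tau_1}(s)-X_{\tau_2}(s)|+|X_{\tau_1}(s-\tau_1)-X_{\tau_2}(s-\tau_2)|\big).$$
I then apply the Cauchy–Schwarz inequality to split off the factor $D_\theta X_{\tau_1}(s)$, whose fourth moment is bounded uniformly by Lemma \ref{lem2} (estimate \eqref{lem2.1}); this leaves an integral of $E|X_{\tau_1}(s)-X_{\tau_2}(s)|^4$ and $E|X_{\tau_1}(s-\tau_1)-X_{\tau_2}(s-\tau_2)|^4$ raised to power $1/2$. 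The key point is that this last quantity is precisely what Corollary \ref{imld5k} (applied with $h=b_2'$ or with the Lipschitz function $x\mapsto x$, $p=4$) and the internal estimate of $M_p(t)$ in Lemma \ref{ujl9s} control: it is bounded by $C_{L,T}$ times
$$\Big(|\tau_1-\tau_2|^2+\int_0^{t\wedge\tau_1}|\varphi(s-\tau_1)-\varphi(s-\tau_2)|^4\,ds+\int_{t\wedge\tau_1}^{t\wedge\tau_2}|\varphi(0)-\varphi(s-\tau_2)|^4\,ds\Big)^{1/2},$$
which is the first term on the right-hand side of \eqref{prlem3.1}.

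**Main obstacle.** The routine parts are the term-by-term splitting and the BDG/Hölder applications; the subtlety is bookkeeping the mismatch between the delays $\tau_1$ and $\tau_2$ inside the Malliavin-derivative equations \eqref{rem1.1}–\eqref{rem1.2}. Here $I_{1,\tau}$ and $J_{1,\tau}$ involve the non-delayed derivative terms $D_\theta X_\tau(s)$ on $[\theta,t]$, so in fact no $\tau$-shift appears in the \emph{integration limits} of $I_{1,\tau}$ and $J_{1,\tau}$ themselves — the only $\tau$-dependence is through the process $X_\tau$ and its derivative $D_\theta X_\tau$. Thus the argument does not require handling shifted integration intervals (that difficulty is deferred to the companion lemma for $I_{2,\tau}$, $J_{2,\tau}$), and the present proof reduces cleanly to combining the two decomposed pieces with Lemma \ref{lem2}, Corollary \ref{imld5k}, and the $L^4$-bounds. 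I would close by adding the drift and diffusion estimates, absorbing all numerical constants into $C_{L,T}$, to obtain \eqref{prlem3.1}.
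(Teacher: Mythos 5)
Your proposal is correct and follows essentially the same route as the paper: the identical two-term decomposition (coefficient difference times $D_\theta X_{\tau_1}$ plus bounded coefficient times the derivative difference), Hölder/BDG, Cauchy--Schwarz to split off the uniformly bounded fourth moment of $D_\theta X_{\tau_1}(s)$ via Lemma \ref{lem2}, and Corollary \ref{imld5k} applied to $h=b_2'$ (resp.\ $\sigma_2'$), which is Lipschitz by Assumption \ref{assum5}. Your observation that no shifted integration limits appear here (deferring that difficulty to the $I_{2,\tau},J_{2,\tau}$ lemma) matches the paper's structure exactly.
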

\begin{proof}
We have
\begin{align*}
I_{1,\tau_1}&(\theta,t)-I_{1,\tau_2}(\theta,t)\\
&=\int_\theta^t [b_2'(s,X_{\tau_1}(s),X_{\tau_1}(s-\tau_1))-b_2'(s,X_{\tau_2}(s),X_{\tau_2}(s-\tau_2))]D_\theta X_{\tau_1}(s)ds\\
&+\int_\theta^t b_2'(s,X_{\tau_2}(s),X_{\tau_2}(s-\tau_2))[D_\theta X_{\tau_1}(s)-D_\theta X_{\tau_2}(s)]ds,\,\,0\leq \theta\leq t.
\end{align*}
Then, by the H\"{o}lder inequality, we obtain
\begin{align}
	 & E|I_{1,\tau_1}(\theta,t)-I_{1,\tau_2}(\theta,t)|^2 \notag                                                                                                                          \\
	 & \leq 2(t-\theta)\int_{\theta}^{t} E|[b_2'(s,X_{\tau_1}(s),X_{\tau_1}(s-\tau_1))-b_2'(s,X_{\tau_2}(s),X_{\tau_2}(s-\tau_2))]D_\theta X_{\tau_1}(s)|^2ds                           \notag\\
	 & +2(t-\theta)\int_\theta^t E|b_2'(s,X_{\tau_2}(s),X_{\tau_2}(s-\tau_2))[D_\theta X_{\tau_1}(s)-D_\theta X_{\tau_2}(s)]|^2ds\notag \\
	 & \leq C_{L,T}\left(\int_{\theta}^{t}E|b_2'(s,X_{\tau_1}(s),X_{\tau_1}(s-\tau_1))-b_2'(s,X_{\tau_2}(s),X_{\tau_2}(s-\tau_2))|^4ds\right)^{1/2}\left(\int_{\theta}^{t}E|D_\theta X_{\tau_1}(s)|^4ds\right)^{1/2} \notag       \\
	 & +C_{L,T}\int_\theta^t E|D_\theta X_{\tau_1}(s)-D_\theta X_{\tau_2}(s)|^2ds,\,\,0\leq \theta\leq t.\label{prlem3.2b}
\end{align}
Note that, by the estimate \eqref{lem2.1}, we have $\int_{\theta}^{t}E|D_\theta X_{\tau_1}(s)|^4ds\leq C_{L,T}.$ Furthermore, an application of Corollary \ref{imld5k} to $h=b_2'$ yields
\begin{align}
&\int_{\theta}^{t}E|b_2'(s,X_{\tau_1}(s),X_{\tau_1}(s-\tau_1))-b_2'(s,X_{\tau_2}(s),X_{\tau_2}(s-\tau_2))|^4ds\notag\\
&\leq C_{L,T}\left(|\tau_{1}-\tau_{2}|^{2}+\int_{0}^{t\wedge \tau_1} |\varphi(s-\tau_1)-\varphi(s-\tau_2)|^{4}ds
+\int_{t\wedge \tau_1}^{t\wedge \tau_2} |\varphi(0)-\varphi(s-\tau_2)|^{4}ds\right),\,\,0\leq \theta\leq t.\notag
\end{align}
Hence, we get
\begin{align}
&E|I_{1,\tau_1}(\theta,t)-I_{1,\tau_2}(\theta,t)|^2\notag\\
&\leq C_{L,T}\left(|\tau_{1}-\tau_{2}|^{2}+\int_{0}^{t\wedge \tau_1} |\varphi(s-\tau_1)-\varphi(s-\tau_2)|^{4}ds
+\int_{t\wedge \tau_1}^{t\wedge \tau_2} |\varphi(0)-\varphi(s-\tau_2)|^{4}ds\right)^{1/2}\nonumber\\
&+C_{L,T}\int_{\theta}^{t} E|D_\theta X_{\tau_1}(s)-D_\theta X_{\tau_2}(s)|^2ds,\,\,0\leq \theta\leq t.\notag
\end{align}
Thus (\ref{prlem3.1}) is verified for $E|I_{1,\tau_1}(\theta,t)-I_{1,\tau_2}(\theta,t)|^2.$  Similarly, we have
\begin{align}
&E|J_{1,\tau_1}(\theta,t)-J_{1,\tau_2}(\theta,t)|^2\leq 2\int_\theta^t E|[\sigma_2'(s,X_{\tau_1}(s),X_{\tau_1}(s-\tau_1))-\sigma_2'(s,X_{\tau_2}(s),X_{\tau_2}(s-\tau_2))]D_\theta X_{\tau_1}(s)|^2ds\notag\\
&+2\int_\theta^t E|\sigma_2'(s,X_{\tau_2}(s),X_{\tau_2}(s-\tau_2))[D_\theta X_{\tau_1}(s)-D_\theta X_{\tau_2}(s)]|^2ds\notag\\
& \leq C\left(\int_{\theta}^{t}E|\sigma_2'(s,X_{\tau_1}(s),X_{\tau_1}(s-\tau_1))-\sigma_2'(s,X_{\tau_2}(s),X_{\tau_2}(s-\tau_2))|^4ds\right)^{1/2}\left(\int_{\theta}^{t}E|D_\theta X_{\tau_1}(s)|^4ds\right)^{1/2}\notag        \\
	 & +C\int_\theta^t E|D_\theta X_{\tau_1}(s)-D_\theta X_{\tau_2}(s)|^2ds,\,\,0\leq \theta\leq t.\label{prlem3.2}
\end{align}
The right hand side of (\ref{prlem3.2}) has the same form as that of (\ref{prlem3.2b}). We therefore can conclude that (\ref{prlem3.1}) also holds true for $E|J_{1,\tau_1}(\theta,t)-J_{1,\tau_2}(\theta,t)|^2.$ The proof of the lemma is complete.
\end{proof}

\begin{lem}\label{jfks1a}Let Assumptions \ref{assum1} and \ref{assum5} hold. Then, we have for $t>\tau_2,$
\begin{align}
&E|I_{2,\tau_1}(\theta,t)-I_{2,\tau_2}(\theta,t)|^2+E|J_{2,\tau_1}(\theta,t)-J_{2,\tau_2}(\theta,t)|^2\notag\\
&\leq C_{L,T}\left(|\tau_{1}-\tau_{2}|^{2}+\int_{0}^{t\wedge \tau_1} |\varphi(s-\tau_1)-\varphi(s-\tau_2)|^{4}ds
+\int_{t\wedge \tau_1}^{t\wedge \tau_2} |\varphi(0)-\varphi(s-\tau_2)|^{4}ds\right)^{1/2}\nonumber\\
&+C_{L,T}\int_{\theta}^{t} E|D_\theta X_{\tau_1}(s)-D_\theta X_{\tau_2}(s)|^2ds,\,\,0\leq \theta\leq t-\tau_2,\label{hjf83}
\end{align}
where $C_{L,T}$ is a positive constant.
\end{lem}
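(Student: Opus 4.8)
The plan is to mimic the proof of Lemma \ref{jfks1}; the new feature is that the lower limits of the integrals defining $I_{2,\tau_1},J_{2,\tau_1}$ and $I_{2,\tau_2},J_{2,\tau_2}$ are $\theta+\tau_1$ and $\theta+\tau_2$ respectively, and that the delayed arguments $D_\theta X_{\tau_1}(s-\tau_1)$ and $D_\theta X_{\tau_2}(s-\tau_2)$ carry different shifts. First I would split, for $0\le\theta\le t-\tau_2$,
$$I_{2,\tau_1}(\theta,t)-I_{2,\tau_2}(\theta,t)=\int_{\theta+\tau_1}^{\theta+\tau_2}b_3'(s,X_{\tau_1}(s),X_{\tau_1}(s-\tau_1))D_\theta X_{\tau_1}(s-\tau_1)\,ds+\int_{\theta+\tau_2}^{t}R(s)\,ds,$$
where $R(s):=b_3'(s,X_{\tau_1}(s),X_{\tau_1}(s-\tau_1))D_\theta X_{\tau_1}(s-\tau_1)-b_3'(s,X_{\tau_2}(s),X_{\tau_2}(s-\tau_2))D_\theta X_{\tau_2}(s-\tau_2)$. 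The first integral runs over an interval of length $|\tau_1-\tau_2|$, so the Cauchy-Schwarz inequality, the bound $|b_3'|\le L$ (Assumption \ref{assum5}) and the moment estimate $E|D_\theta X_{\tau_1}(s-\tau_1)|^2\le C_{2,L,T}$ from Lemma \ref{lem2} give that its second moment is $\le C_{L,T}|\tau_1-\tau_2|^2$.

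For $\int_{\theta+\tau_2}^t R(s)\,ds$ I would add and subtract $b_3'(s,X_{\tau_2}(s),X_{\tau_2}(s-\tau_2))D_\theta X_{\tau_1}(s-\tau_1)$, splitting $R(s)$ into $[b_3'(s,X_{\tau_1}(s),X_{\tau_1}(s-\tau_1))-b_3'(s,X_{\tau_2}(s),X_{\tau_2}(s-\tau_2))]\,D_\theta X_{\tau_1}(s-\tau_1)$ and $b_3'(s,X_{\tau_2}(s),X_{\tau_2}(s-\tau_2))\,[D_\theta X_{\tau_1}(s-\tau_1)-D_\theta X_{\tau_2}(s-\tau_2)]$. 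The first summand is handled exactly as in Lemma \ref{jfks1}: by H\"{o}lder's inequality, the bound $E|D_\theta X_{\tau_1}(s-\tau_1)|^4\le C_{4,L,T}$ (Lemma \ref{lem2}) and Corollary \ref{imld5k} applied to $h=b_3'$ (Lipschitz by Assumption \ref{assum5}), its second moment is $\le C_{L,T}\big(|\tau_1-\tau_2|^2+\int_0^{t\wedge\tau_1}|\varphi(s-\tau_1)-\varphi(s-\tau_2)|^4ds+\int_{t\wedge\tau_1}^{t\wedge\tau_2}|\varphi(0)-\varphi(s-\tau_2)|^4ds\big)^{1/2}$. For the second summand I would use $|b_3'|\le L$ together with the further decomposition $D_\theta X_{\tau_1}(s-\tau_1)-D_\theta X_{\tau_2}(s-\tau_2)=[D_\theta X_{\tau_1}(s-\tau_1)-D_\theta X_{\tau_2}(s-\tau_1)]+[D_\theta X_{\tau_2}(s-\tau_1)-D_\theta X_{\tau_2}(s-\tau_2)]$; after the change of variables $u=s-\tau_1$ the first bracket produces $C_{L,T}\int_\theta^t E|D_\theta X_{\tau_1}(u)-D_\theta X_{\tau_2}(u)|^2\,du$ (the resulting interval $[\theta+\tau_2-\tau_1,t-\tau_1]$ being contained in $[\theta,t]$, since $\theta\le t-\tau_2$ and $\tau_1<\tau_2$), while by the time-regularity estimate (\ref{lem2.2}) the second bracket has second moment $\le C_{L,T}|\tau_1-\tau_2|$, which is absorbed into the square-root term above since $|\tau_1-\tau_2|\le T$ for $t>\tau_2$. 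The estimate for $E|J_{2,\tau_1}(\theta,t)-J_{2,\tau_2}(\theta,t)|^2$ follows by the same decomposition, with the Burkholder-Davis-Gundy inequality replacing H\"{o}lder's inequality for the stochastic integrals (the integrands are $\mathbb{F}$-adapted in $s$, since $D_\theta X_\tau(s-\tau)$ is $\mathcal{F}_{s-\tau}$-measurable); the short-interval term then contributes only of order $|\tau_1-\tau_2|$, which is still dominated by the right-hand side of (\ref{hjf83}).

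All the analytic ingredients are already in place (the moment and time-regularity bounds for $D_\theta X_\tau$ in Lemma \ref{lem2} and the coefficient-difference estimate of Corollary \ref{imld5k}), so no new estimate is needed. I expect the main obstacle to be purely the bookkeeping of the mismatched integration limits and shifted arguments: verifying that after the substitution $u=s-\tau_1$ the integral of $E|D_\theta X_{\tau_1}(u)-D_\theta X_{\tau_2}(u)|^2$ really lives over a subinterval of $[\theta,t]$, and that the delay-shift errors produced by (\ref{lem2.2}) are genuinely of order $|\tau_1-\tau_2|\le(|\tau_1-\tau_2|^2)^{1/2}$, hence fit under the square root on the right-hand side of (\ref{hjf83}).
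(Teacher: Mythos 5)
Your proposal is correct and follows essentially the same route as the paper: split off the short interval $[\theta+\tau_1,\theta+\tau_2]$, then add and subtract to separate a coefficient-difference term (handled via Corollary \ref{imld5k} as in Lemma \ref{jfks1}) from a derivative-difference term, which is further decomposed using the time-regularity bound (\ref{lem2.2}) and a change of variables landing inside $[\theta,t]$. The only (immaterial) difference is the choice of intermediate point in the last decomposition — you insert $D_\theta X_{\tau_2}(s-\tau_1)$ whereas the paper inserts $D_\theta X_{\tau_1}(s-\tau_2)$ — and both yield the same bounds.
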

\begin{proof}
When $t>\tau_2$ and $0\leq \theta\leq t-\tau_2$, we have
\begin{align}
&I_{2,\tau_{1}}(\theta,t)-I_{2,\tau_{2}}(\theta,t)\nonumber\\
&=	\int_{\theta+\tau_{1}}^{t}b'_{3}(s,X_{\tau_{1}}(s), X_{\tau_{1}}(s-\tau_{1}))D_{\theta}X_{\tau_{1}}(s-\tau_{1})ds-\int_{\theta+\tau_{2}}^{t}b'_{3}(s,X_{\tau_{2}}(s), X_{\tau_{2}}(s-\tau_{2}))D_{\theta}X_{\tau_{2}}(s-\tau_{2})ds\nonumber\\
&=\int_{\theta+\tau_{2}}^{t}[b'_{3}(s,X_{\tau_{1}}(s), X_{\tau_{1}}(s-\tau_{1}))-b'_{3}(s,X_{\tau_{2}}(s), X_{\tau_{2}}(s-\tau_{2}))]D_{\theta}X_{\tau_{1}}(s-\tau_{1})ds\nonumber\\
&+\int_{\theta+\tau_{1}}^{\theta+\tau_{2}}b'_{3}(s,X_{\tau_{1}}(s), X_{\tau_{1}}(s-\tau_{1}))D_{\theta}X_{\tau_{1}}(s-\tau_{1})ds\notag\\
&+\int_{\theta+\tau_{2}}^{t}b'_{3}(s,X_{\tau_{2}}(s), X_{\tau_{2}}(s-\tau_{2}))[D_{\theta}X_{\tau_{1}}(s-\tau_{1})-D_{\theta}X_{\tau_{2}}(s-\tau_{2})]ds.\label{case1.2}
\end{align}
We observe that the first addend in the right hand side of (\ref{case1.2}) can be estimated as in the proof of Lemma \ref{jfks1} and we obtain
\begin{align*}
E\big|&\int_{\theta+\tau_{2}}^{t}[b'_{3}(s,X_{\tau_{1}}(s), X_{\tau_{1}}(s-\tau_{1}))-b'_{3}(s,X_{\tau_{2}}(s), X_{\tau_{2}}(s-\tau_{2}))]D_{\theta}X_{\tau_{1}}(s-\tau_{1})ds\big|^2\\
&\leq C_{L,T}\left(|\tau_{1}-\tau_{2}|^{2}+\int_{0}^{t\wedge \tau_1} |\varphi(s-\tau_1)-\varphi(s-\tau_2)|^{4}ds
+\int_{t\wedge \tau_1}^{t\wedge \tau_2} |\varphi(0)-\varphi(s-\tau_2)|^{4}ds\right)^{1/2}.
\end{align*}
For the second addend, it follows from (\ref{lem2.1}) that
\begin{align*}
E\big|&\int_{\theta+\tau_{1}}^{\theta+\tau_{2}}b'_{3}(s,X_{\tau_{1}}(s), X_{\tau_{1}}(s-\tau_{1}))D_{\theta}X_{\tau_{1}}(s-\tau_{1})ds\big|^2\\
&\leq |\tau_1-\tau_2|\int_{\theta+\tau_{1}}^{\theta+\tau_{2}}E|b'_{3}(s,X_{\tau_{1}}(s), X_{\tau_{1}}(s-\tau_{1}))D_{\theta}X_{\tau_{1}}(s-\tau_{1})|^2ds\\
&\leq C_{L,T}|\tau_1-\tau_2|^2\leq C_{L,T}|\tau_1-\tau_2|.
\end{align*}
For the third addend, we use the H\"older inequality and (\ref{lem2.2}) to get
\begin{align*}
E\big|&\int_{\theta+\tau_{2}}^{t}b'_{3}(s,X_{\tau_{2}}(s), X_{\tau_{2}}(s-\tau_{2}))[D_{\theta}X_{\tau_{1}}(s-\tau_{1})-D_{\theta}X_{\tau_{2}}(s-\tau_{2})]ds\big|^2\\
&\leq C_{L,T}\int_{\theta+\tau_{2}}^{t}E|D_{\theta}X_{\tau_{1}}(s-\tau_{1})-D_{\theta}X_{\tau_{2}}(s-\tau_{2})|^2ds\\
&\leq C_{L,T}\int_{\theta+\tau_{2}}^{t}E|D_{\theta}X_{\tau_{1}}(s-\tau_{1})-D_{\theta}X_{\tau_{1}}(s-\tau_{2})|^2ds
+C_{L,T}\int_{\theta+\tau_{2}}^{t}E|D_{\theta}X_{\tau_{1}}(s-\tau_{2})-D_{\theta}X_{\tau_{2}}(s-\tau_{2})|^2ds\\
&\leq C_{L,T}\int_{\theta+\tau_{2}}^{t}|\tau_{1}-\tau_{2}|ds
+C_{L,T}\int_{\theta}^{t}E|D_{\theta}X_{\tau_{1}}(s)-D_{\theta}X_{\tau_{2}}(s)|^2ds\\
&\leq C_{L,T}|\tau_{1}-\tau_{2}|+C_{L,T}\int_{\theta}^{t}E|D_{\theta}X_{\tau_{1}}(s)-D_{\theta}X_{\tau_{2}}(s)|^2ds.
\end{align*}
Hence, we can obtain (\ref{hjf83}) for $E|I_{2,\tau_1}(\theta,t)-I_{2,\tau_2}(\theta,t)|^2.$ This finishes the proof of Lemma because the estimate for $E|J_{2,\tau_1}(\theta,t)-J_{2,\tau_2}(\theta,t)|^2$ can be treated similarly.
\end{proof}

\begin{prop}\label{lem3}
Let Assumptions \ref{assum1} and \ref{assum5} hold. Then, there exists a positive constants $C_{L,T}$ such that, for all $t\in[0,T],$
\begin{align}
E\|DX_{\tau_1}(t)&-DX_{\tau_2}(t)\|^2_{L^2[0,T]}\leq C_{T,L}\bigg(t^2|\tau_{1}-\tau_{2}|^{2}\notag\\
&+t\int_{0}^{t\wedge \tau_1} |\varphi(s-\tau_1)-\varphi(s-\tau_2)|^{4}ds
+t\int_{t\wedge \tau_1}^{t\wedge \tau_2} |\varphi(0)-\varphi(s-\tau_2)|^{4}ds\bigg)^{1/2},\label{lem3.1}
\end{align}
where $C_{L,T}$ is a positive constant.
\end{prop}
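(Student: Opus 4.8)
The plan is to fix the variable $\theta$, establish a Gronwall-type inequality in $t$ for $E|D_\theta X_{\tau_1}(t)-D_\theta X_{\tau_2}(t)|^2$, and then integrate the resulting pointwise bound over $\theta\in[0,t]$, using that $\|DX_{\tau_i}(t)\|^2_{L^2[0,T]}=\int_0^t|D_\theta X_{\tau_i}(t)|^2\,d\theta$ by Proposition \ref{prop1}. Starting from the representations \eqref{rem1.1}--\eqref{rem1.2} (valid under Assumptions \ref{assum1}--\ref{assum5}), I would write, for $0\le\theta\le t$,
\[
D_\theta X_{\tau_1}(t)-D_\theta X_{\tau_2}(t)=\Delta\sigma(\theta)+\sum_{i=1}^{2}\bigl(I_{i,\tau_1}(\theta,t)-I_{i,\tau_2}(\theta,t)\bigr)+\sum_{i=1}^{2}\bigl(J_{i,\tau_1}(\theta,t)-J_{i,\tau_2}(\theta,t)\bigr),
\]
where $\Delta\sigma(\theta):=\sigma(\theta,X_{\tau_1}(\theta),X_{\tau_1}(\theta-\tau_1))-\sigma(\theta,X_{\tau_2}(\theta),X_{\tau_2}(\theta-\tau_2))$ and, with the convention $\int_{\theta+\tau}^{t}=0$ for $\theta+\tau>t$, the terms $I_{2,\tau_i},J_{2,\tau_i}$ are read as $0$ whenever $\theta>t-\tau_i$.

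Applying the elementary inequality $(a_1+\cdots+a_5)^2\le5(a_1^2+\cdots+a_5^2)$, I would bound each block. With $\Phi(t):=\bigl(|\tau_1-\tau_2|^2+\int_0^{t\wedge\tau_1}|\varphi(s-\tau_1)-\varphi(s-\tau_2)|^4\,ds+\int_{t\wedge\tau_1}^{t\wedge\tau_2}|\varphi(0)-\varphi(s-\tau_2)|^4\,ds\bigr)^{1/2}$, Lemma \ref{jfks1} controls the blocks built from $I_1,J_1$, and Lemma \ref{jfks1a} controls those built from $I_2,J_2$ on the set $\{\theta\le t-\tau_2\}$; each produces a term $\le C_{L,T}\Phi(t)$ plus a term $C_{L,T}\int_\theta^t E|D_\theta X_{\tau_1}(s)-D_\theta X_{\tau_2}(s)|^2\,ds$. \textbf{The main obstacle} is the intermediate regime $t-\tau_2<\theta\le t-\tau_1$ (present only when $\tau_1<t$), in which $D_\theta X_{\tau_1}(t)$ still carries $I_{2,\tau_1}(\theta,t)$ and $J_{2,\tau_1}(\theta,t)$ while $D_\theta X_{\tau_2}(t)$ does not, so that Lemma \ref{jfks1a} no longer applies; here I would estimate these two leftover terms directly, noting that the integration interval $[\theta+\tau_1,t]$ has length $t-\theta-\tau_1\le(\tau_2-\tau_1)\wedge t$, so that H\"{o}lder's inequality (resp.\ the It\^{o} isometry) together with the moment bound \eqref{lem2.1} gives $E|I_{2,\tau_1}(\theta,t)|^2+E|J_{2,\tau_1}(\theta,t)|^2\le C_{L,T}(t-\theta-\tau_1)$. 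Combining all blocks, I arrive at
\[
E|D_\theta X_{\tau_1}(t)-D_\theta X_{\tau_2}(t)|^2\le C_{L,T}\bigl(E|\Delta\sigma(\theta)|^2+\Phi(t)+\rho(\theta,t)\bigr)+C_{L,T}\int_\theta^t E|D_\theta X_{\tau_1}(s)-D_\theta X_{\tau_2}(s)|^2\,ds,\qquad 0\le\theta\le t\le T,
\]
where $\rho(\theta,t):=t-\theta-\tau_1$ on the intermediate regime and $\rho(\theta,t):=0$ otherwise; the remaining cases $t\le\tau_2$ are treated by the same scheme, only the list of non-empty $\theta$-regimes changing.

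Since the bracket $E|\Delta\sigma(\theta)|^2+\Phi(t)+\rho(\theta,t)$ is non-decreasing in $t$ for fixed $\theta$, Gronwall's lemma applied in $t$ gives $E|D_\theta X_{\tau_1}(t)-D_\theta X_{\tau_2}(t)|^2\le C_{L,T}e^{C_{L,T}T}\bigl(E|\Delta\sigma(\theta)|^2+\Phi(t)+\rho(\theta,t)\bigr)$, and integrating this over $\theta\in[0,t]$, with $\int_0^t\rho(\theta,t)\,d\theta\le\frac12\bigl((\tau_2-\tau_1)\wedge t\bigr)^2\le\frac12\,t|\tau_1-\tau_2|$, yields
\[
E\|DX_{\tau_1}(t)-DX_{\tau_2}(t)\|^2_{L^2[0,T]}\le C_{L,T}\Bigl(\int_0^t E|\Delta\sigma(\theta)|^2\,d\theta+t\,\Phi(t)+t|\tau_1-\tau_2|\Bigr).
\]
Finally I would bound $\int_0^t E|\Delta\sigma(\theta)|^2\,d\theta$ using the first estimate of Corollary \ref{imld5k} with $h=\sigma$ and $p=2$ (which keeps the factor $t$ in front of $|\tau_1-\tau_2|$) and then replacing the two $\varphi$-integrals it produces, via Cauchy--Schwarz $\int_A|\varphi|^2\le|A|^{1/2}\bigl(\int_A|\varphi|^4\bigr)^{1/2}$ with $|A|\le t$, by their fourth-power analogues; this gives $\int_0^t E|\Delta\sigma(\theta)|^2\,d\theta\le C_{L,T}\,\Psi(t)$, where $\Psi(t)$ denotes the right-hand side of \eqref{lem3.1} with the constant removed. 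Since also $t\,\Phi(t)\le(1\vee T)^{1/2}\Psi(t)$ (using $t\le T$) and $t|\tau_1-\tau_2|=(t^2|\tau_1-\tau_2|^2)^{1/2}\le\Psi(t)$, the three terms on the right above are each $\le C_{L,T}\Psi(t)$, which is exactly \eqref{lem3.1} with the constant absorbing $e^{C_{L,T}T}$.
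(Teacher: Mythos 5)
Your proposal follows essentially the same route as the paper: the same splitting of $[0,t]$ into the three $\theta$-regimes, the same appeal to Lemmas \ref{jfks1} and \ref{jfks1a}, the same direct bound $E|I_{2,\tau_1}(\theta,t)|^2+E|J_{2,\tau_1}(\theta,t)|^2\leq C_{L,T}(t-\theta-\tau_1)$ for the leftover terms in the intermediate regime, and a Gronwall argument. The only organizational difference is that you run Gronwall pointwise in $\theta$ and integrate afterwards, whereas the paper integrates over $\theta$ first, uses $\int_0^t\int_\theta^t E|D_\theta X_{\tau_1}(s)-D_\theta X_{\tau_2}(s)|^2\,ds\,d\theta=\int_0^t E\|DX_{\tau_1}(s)-DX_{\tau_2}(s)\|^2_{L^2[0,T]}\,ds$, and applies the Gronwall-type lemma to $t\mapsto E\|DX_{\tau_1}(t)-DX_{\tau_2}(t)\|^2_{L^2[0,T]}$; your treatment of $\int_0^tE|\Delta\sigma(\theta)|^2d\theta$ (Corollary \ref{imld5k} with $p=2$, then Cauchy--Schwarz on the $\varphi$-integrals) is also a harmless variant of the paper's (H\"older in $\theta$ first, then $p=4$). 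One genuine slip: your claim that the forcing term of the pointwise Gronwall inequality is non-decreasing in $t$ is false, because for fixed $\theta$ the function $\rho(\theta,\cdot)$ increases to $\tau_2-\tau_1$ on $[\theta+\tau_1,\theta+\tau_2)$ and then drops back to $0$ once $t\geq\theta+\tau_2$. This is immediately repaired by replacing $\rho(\theta,t)$ with its running supremum, which is at most $(\tau_2-\tau_1)\wedge\max\{t-\theta-\tau_1,0\}$, before invoking the Gronwall-type lemma; one then still has $\int_0^t\sup_{s\leq t}\rho(\theta,s)\,d\theta\leq t|\tau_1-\tau_2|$, so the final combination of terms is unaffected and the argument is correct.
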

\begin{proof} We consider the following cases.

\noindent{\it Case 1: $t>\tau_2$.} In this case, we write
\begin{align*}
E\|D X_{\tau_{1}}(t)) - DX_{\tau_{2}}(t)\|^{2}_{L^2[0,T]}&=\int_0^{t-\tau_2}E|D_\theta X_{\tau_{1}}(t)) - D_{\theta}X_{\tau_{2}}(t)|^{2}d\theta+\int_{t-\tau_2}^{t-\tau_1}E|D_\theta X_{\tau_{1}}(t)) - D_{\theta}X_{\tau_{2}}(t)|^{2}d\theta\\&
+\int_{t-\tau_1}^{t}E|D_\theta X_{\tau_{1}}(t)) - D_{\theta}X_{\tau_{2}}(t)|^{2}d\theta.
\end{align*}
We observe from the equations \eqref{rem1.1} and \eqref{rem1.2} that, when $0\leq \theta\leq t-\tau_2,$ we have
\begin{align*}
D_\theta X_{\tau_{1}}(t)) - D_{\theta}X_{\tau_{2}}(t)&=\sigma(\theta, X_{\tau_{1}}(\theta),X_{\tau_{1}}(\theta-\tau_{1}))- \sigma(\theta, X_{\tau_{2}}(\theta),X_{\tau_{2}}(\theta-\tau_{2}))\\
&+I_{1,\tau_1}(\theta,t)-I_{1,\tau_2}(\theta,t)+I_{2,\tau_1}(\theta,t)-I_{2,\tau_2}(\theta,t)\\
&+J_{1,\tau_1}(\theta,t)-J_{1,\tau_2}(\theta,t)+J_{2,\tau_1}(\theta,t)-J_{2,\tau_2}(\theta,t).
\end{align*}
When $t-\tau_2< \theta\leq t-\tau_1,$ we have
\begin{align*}
D_\theta X_{\tau_{1}}(t)) - D_{\theta}X_{\tau_{2}}(t)&=\sigma(\theta, X_{\tau_{1}}(\theta),X_{\tau_{1}}(\theta-\tau_{1}))- \sigma(\theta, X_{\tau_{2}}(\theta),X_{\tau_{2}}(\theta-\tau_{2}))\\
&+I_{1,\tau_1}(\theta,t)-I_{1,\tau_2}(\theta,t)+I_{2,\tau_1}(\theta,t)\\
&+J_{1,\tau_1}(\theta,t)-J_{1,\tau_2}(\theta,t)+J_{2,\tau_1}(\theta,t).
\end{align*}
When $t-\tau_1< \theta\leq t,$ we have
\begin{align*}
D_\theta X_{\tau_{1}}(t) - D_{\theta}X_{\tau_{2}}(t)&=\sigma(\theta, X_{\tau_{1}}(\theta),X_{\tau_{1}}(\theta-\tau_{1}))- \sigma(\theta, X_{\tau_{2}}(\theta),X_{\tau_{2}}(\theta-\tau_{2}))\\
&+I_{1,\tau_1}(\theta,t)-I_{1,\tau_2}(\theta,t)+J_{1,\tau_1}(\theta,t)-J_{1,\tau_2}(\theta,t).
\end{align*}
Hence, using the estimates established in Lemmas \ref{jfks1} and \ref{jfks1a}, we deduce
\begin{align*}
E\|D &X_{\tau_{1}}(t)) - DX_{\tau_{2}}(t)\|^{2}_{L^2[0,T]}\\
&\leq \int_0^t E|\sigma(\theta, X_{\tau_{1}}(\theta),X_{\tau_{1}}(\theta-\tau_{1}))- \sigma(\theta, X_{\tau_{2}}(\theta),X_{\tau_{2}}(\theta-\tau_{2}))|^2d\theta\\
&+\int_{t-\tau_2}^{t-\tau_1}[E|I_{2,\tau_1}(\theta,t)|^{2}+E|J_{2,\tau_1}(\theta,t)|^{2}]d\theta\\
&+C_{L,T}\,t\left(|\tau_{1}-\tau_{2}|^{2}+\int_{0}^{t\wedge \tau_1} |\varphi(s-\tau_1)-\varphi(s-\tau_2)|^{4}ds
+\int_{t\wedge \tau_1}^{t\wedge \tau_2} |\varphi(0)-\varphi(s-\tau_2)|^{4}ds\right)^{1/2}\nonumber\\
&+C_{L,T}\int_{0}^{t}\int_{\theta}^{t} E|D_\theta X_{\tau_1}(s)-D_\theta X_{\tau_2}(s)|^2dsd\theta,\,\,\,t>\tau_2.
\end{align*}
By the H\"older inequality and Corollary \ref{imld5k}
 \begin{align}
 &\int_{0}^{t}E|\sigma(\theta, X_{\tau_{1}}(\theta), X_{\tau_{1}}(\theta-\tau_{1}))-\sigma(\theta, X_{\tau_{2}}(\theta), X_{\tau_{2}}(\theta-\tau_{2}))|^{2}d\theta \nonumber\\
 &\leq \sqrt{t}\left(\int_{0}^{t}E|\sigma(\theta, X_{\tau_{1}}(\theta), X_{\tau_{1}}(s-\theta))-\sigma(\theta, X_{\tau_{2}}(\theta), X_{\tau_{2}}(s-\theta))|^{4}d\theta\right)^{1/2}\nonumber\\
 &\leq C_{T,L}\left(t^2|\tau_{1}-\tau_{2}|^{2}+t\int_{0}^{t\wedge \tau_1} |\varphi(s-\tau_1)-\varphi(s-\tau_2)|^{4}ds
+t\int_{t\wedge \tau_1}^{t\wedge \tau_2} |\varphi(0)-\varphi(s-\tau_2)|^{4}ds\right)^{1/2}.\notag
 \end{align}
On the other hand, it is easy to see from the boundedness of $b_3',\sigma_3'$ and (\ref{lem2.1}) that $E|I_{2,\tau_1}(\theta,t)|^{2}+E|J_{2,\tau_1}(\theta,t)|^{2}\leq C_{L,T}(t-\theta-\tau_1)$ and hence,
$$\int_{t-\tau_2}^{t-\tau_1}[E|I_{2,\tau_1}(\theta,t)|^{2}+E|J_{2,\tau_1}(\theta,t)|^{2}]d\theta\leq C_{L,T}|\tau_{1}-\tau_{2}|^{2}\leq C_{L,T}\,t|\tau_{1}-\tau_{2}|.$$
Combining the above estimates yields
\begin{align*}
E\|D &X_{\tau_{1}}(t) - DX_{\tau_{2}}(t)\|^{2}_{L^2[0,T]}\\
&\leq C_{L,T}\left(t^2|\tau_{1}-\tau_{2}|^{2}+t\int_{0}^{t\wedge \tau_1} |\varphi(s-\tau_1)-\varphi(s-\tau_2)|^{4}ds
+t\int_{t\wedge \tau_1}^{t\wedge \tau_2} |\varphi(0)-\varphi(s-\tau_2)|^{4}ds\right)^{1/2}\nonumber\\
&+C_{L,T}\int_{0}^{t}\int_{\theta}^{t} E|D_\theta X_{\tau_1}(s)-D_\theta X_{\tau_2}(s)|^2dsd\theta\\
&= C_{L,T}\left(t^2|\tau_{1}-\tau_{2}|^{2}+t\int_{0}^{t\wedge \tau_1} |\varphi(s-\tau_1)-\varphi(s-\tau_2)|^{4}ds
+t\int_{t\wedge \tau_1}^{t\wedge \tau_2} |\varphi(0)-\varphi(s-\tau_2)|^{4}ds\right)^{1/2}\nonumber\\
&+C_{L,T}\int_{0}^{t} E\|DX_{\tau_{1}}(s) - DX_{\tau_{2}}(s)\|^{2}_{L^2[0,T]}ds,\,\,\,t>\tau_2,
\end{align*}
which, by  the Gronwall-type lemma (see, Theorem 1.4.2 in \cite{Pachpatte1998}), gives us (\ref{lem3.1}) when $t>\tau_2.$

We omit the detailed proof of {\it Case 2: $\tau_1<t\leq \tau_2$} and {\it Case 3: $t\leq \tau_1$} because the arguments of {\it Case 1} can be replicated.

The proof of Proposition is complete.
\end{proof}
We now are ready to state and prove the main result of this paper.
\begin{thm}\label{them2}
	Let Assumption \ref{assum1} and \ref{assum5} hold. We assume, in addition, that
	$$|\sigma(t,x,y)|\geq \sigma_0>0,\,\,\forall\,t\in [0,T],x,y\in \mathbb{R}.$$
Then, for any $g\in \mathcal{B}$  and  $t\in (0,T],$ we have
\begin{align}
|Eg(X_{\tau_{1}}(t))&-Eg(X_{\tau_{2}}(t))|\leq C_{T,L} \bigg(|\tau_{1}-\tau_{2}|^{2}\notag\\
 &+t^{-1}\int_{0}^{t\wedge \tau_1} |\varphi(s-\tau_1)-\varphi(s-\tau_2)|^{4}ds
+t^{-1}\int_{t\wedge \tau_1}^{t\wedge \tau_2} |\varphi(0)-\varphi(s-\tau_2)|^{4}ds\bigg)^{1/4},\label{0olkm3}
	\end{align}
where $C_{L,T}$ is a positive constant.
\end{thm}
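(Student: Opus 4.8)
The plan is to invoke Lemma \ref{lm2.1} with $F_1=X_{\tau_1}(t)$ and $F_2=X_{\tau_2}(t)$ and then to estimate every quantity on its right-hand side using the auxiliary results of this section. First I would check the hypotheses of Lemma \ref{lm2.1}. By Proposition \ref{prop1} and Remark \ref{rem1}, together with Proposition \ref{9jk3k} and Lemmas \ref{lem1}--\ref{lem2}, we have $X_{\tau_1}(t)\in\mathbb{D}^{2,2}$ and $X_{\tau_2}(t)\in\mathbb{D}^{1,2}$; moreover, since $\sigma$ is bounded away from zero, Proposition \ref{lem4} gives the finiteness of the negative moments of $\|DX_{\tau_1}(t)\|_{L^2[0,T]}$, which in particular forces $\|DX_{\tau_1}(t)\|_{L^2[0,T]}>0$ a.s. Thus Lemma \ref{lm2.1} applies.

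Next I would bound the ``Malliavin coefficient''
\[
A(t):=\left(E\|DX_{\tau_1}(t)\|^{-8}_{L^2[0,T]}\,E\Big(\int_0^T\!\!\int_0^T|D_\theta D_rX_{\tau_1}(t)|^2d\theta dr\Big)^2+\big(E\|DX_{\tau_1}(t)\|^{-2}_{L^2[0,T]}\big)^2\right)^{1/4}.
\]
Proposition \ref{lem4} with $p=4$ yields $E\|DX_{\tau_1}(t)\|^{-8}_{L^2[0,T]}\leq C_{L,T}t^{-4}$, and with $p=1$ it yields $\big(E\|DX_{\tau_1}(t)\|^{-2}_{L^2[0,T]}\big)^2\leq C_{L,T}t^{-2}$. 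Since $D_\theta D_rX_{\tau_1}(t)$ vanishes unless $\theta\vee r\leq t$, the Cauchy--Schwarz inequality on $[0,t]^2$ together with Proposition \ref{9jk3k} gives
\[
E\Big(\int_0^T\!\!\int_0^T|D_\theta D_rX_{\tau_1}(t)|^2d\theta dr\Big)^2\leq t^2\int_0^t\!\!\int_0^t E|D_\theta D_rX_{\tau_1}(t)|^4\,d\theta dr\leq C_{L,T}t^4.
\]
Hence the product in the first summand of $A(t)$ is bounded by a constant, and therefore $A(t)\leq C_{L,T}(1+t^{-2})^{1/4}\leq C_{L,T}\,t^{-1/2}$ for all $t\in(0,T]$.

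Then I would estimate $\|X_{\tau_1}(t)-X_{\tau_2}(t)\|_{1,2}$. Set
\[
\Lambda(t):=|\tau_1-\tau_2|^2+t^{-1}\int_0^{t\wedge\tau_1}|\varphi(s-\tau_1)-\varphi(s-\tau_2)|^4ds+t^{-1}\int_{t\wedge\tau_1}^{t\wedge\tau_2}|\varphi(0)-\varphi(s-\tau_2)|^4ds.
\]
Using $E|Y|^2\leq(E|Y|^4)^{1/2}$ and Proposition \ref{them1} with $p=4$ one gets $E|X_{\tau_1}(t)-X_{\tau_2}(t)|^2\leq C_{L,T}\big(t^2\Lambda(t)\big)^{1/2}=C_{L,T}\,t\,\Lambda(t)^{1/2}$, and Proposition \ref{lem3} gives $E\|DX_{\tau_1}(t)-DX_{\tau_2}(t)\|^2_{L^2[0,T]}\leq C_{L,T}\big(t^2\Lambda(t)\big)^{1/2}=C_{L,T}\,t\,\Lambda(t)^{1/2}$. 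Adding the two bounds yields $\|X_{\tau_1}(t)-X_{\tau_2}(t)\|_{1,2}^2\leq C_{L,T}\,t\,\Lambda(t)^{1/2}$, i.e. $\|X_{\tau_1}(t)-X_{\tau_2}(t)\|_{1,2}\leq C_{L,T}\,t^{1/2}\Lambda(t)^{1/4}$. Substituting the bound for $A(t)$ and this last estimate into Lemma \ref{lm2.1} makes the powers of $t$ cancel and produces exactly \eqref{0olkm3}.

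Given the earlier lemmas these steps are computationally routine; the one place requiring genuine care is the bookkeeping of the powers of $t$. The inverse-moment bounds of Proposition \ref{lem4} blow up like $t^{-1/2}$ as $t\downarrow0$, and this singularity must be compensated exactly by the $t^{1/2}$ coming from the $L^2$- and Malliavin-derivative distance estimates — which is why the prefactor $t^{p/2-1}$ in Proposition \ref{them1} and the prefactors $t^2$, $t$ in Proposition \ref{lem3} are precisely what is needed. A secondary point is keeping the constant uniform in $\tau_1,\tau_2$ and in $t\in(0,T]$, which is automatic since all constants invoked are of the form $C_{L,T}$ and $t\leq T$.
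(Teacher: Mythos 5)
Your proposal is correct and follows essentially the same route as the paper: apply Lemma \ref{lm2.1} to $F_1=X_{\tau_1}(t)$, $F_2=X_{\tau_2}(t)$, bound the inverse-moment and second-derivative terms via Propositions \ref{lem4} and \ref{9jk3k}, and bound $\|F_1-F_2\|_{1,2}$ via Propositions \ref{them1} (with $p=4$) and \ref{lem3}, the powers of $t$ cancelling exactly as you describe. Your write-up is in fact slightly more explicit than the paper's about the $t^{-1/2}\cdot t^{1/2}$ cancellation and about why $\|DX_{\tau_1}(t)\|_{L^2[0,T]}>0$ a.s.
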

\begin{proof} Fixed $t\in (0,T],$ we consider the random variables $F_1=X_{\tau_{1}}(t)$ and $F_2=X_{\tau_{2}}(t).$ Thanks to Propositions \ref{lem4}, \ref{9jk3k}, \ref{them1} and \ref{lem3} we have the following
$$E\|DF_1\|^{-8}_{L^2[0,T]}\leq C_{T,L}\,t^{-4},\,\,\,(E\|DF_1\|^{-2}_{L^2[0,T]})^2\leq C_{T,L}\,t^{-2},$$
\begin{align}
E\left(\int_0^T\int_0^T|D_\theta D_rF_1|^2d\theta dr\right)^2&=E\left(\int_0^t\int_0^t|D_\theta D_rE|X_{\tau_1}(t)|^2d\theta dr\right)^2\notag\\
&\leq t^2\int_0^t\int_0^tE|D_\theta D_rE|X_{\tau_1}(t)|^4d\theta dr\notag\\
&\leq C_{L,T}\,t^4,
\end{align}
and
\begin{align}
&\|F_1-F_2\|_{1,2}=\left(E|X_{\tau_1}(t)-X_{\tau_2}(t)|^2+E\|DX_{\tau_1}(t)-DX_{\tau_2}(t)\|^2_{L^2[0,T]}\right)^{1/2}\notag\\
&\leq C_{T,L}\left(t^2|\tau_{1}-\tau_{2}|^{2}+t\int_{0}^{t\wedge \tau_1} |\varphi(s-\tau_1)-\varphi(s-\tau_2)|^{4}ds
+t\int_{t\wedge \tau_1}^{t\wedge \tau_2} |\varphi(0)-\varphi(s-\tau_2)|^{4}ds\right)^{1/4}.\notag
\end{align}
Consequently, in view of Lemma \ref{lm2.1}, we obtain
\begin{align}
	&|Eg(X_{\tau_{1}}(t))-Eg(X_{\tau_{2}}(t))|=|Eg(F_1)-Eg(F_2)|\nonumber\\
&\leq C \left(E\|DF_1\|^{-8}_{L^2[0,T]}E\left(\int_0^T\int_0^T|D_\theta D_rF_1|^2d\theta dr\right)^2+(E\|DF_1\|^{-2}_{L^2[0,T]})^2\right)^{\frac{1}{4}}\|F_1-F_2\|_{1,2}\notag\\
&\leq C_{T,L}\left(|\tau_{1}-\tau_{2}|^{2}+t^{-1}\int_{0}^{t\wedge \tau_1} |\varphi(s-\tau_1)-\varphi(s-\tau_2)|^{4}ds
+t^{-1}\int_{t\wedge \tau_1}^{t\wedge \tau_2} |\varphi(0)-\varphi(s-\tau_2)|^{4}ds\right)^{1/4}.\notag
\end{align}
This completes the proof.
\end{proof}

Clearly, when the initial data $\varphi$ is a continuous function, the estimate (\ref{0olkm3}) implies that $X_{\tau_{2}}(t)$ weakly converges to $X_{\tau_{1}}(t)$ as $\tau_2\to\tau_1.$ Moreover, when $\varphi$ is  H\"{o}lder continuous, we have the following H\"older continuity of solutions with respect to delay parameter.
\begin{cor}Suppose the assumptions of Theorem \ref{them2}. In addition, we assume that the initial data $\varphi$ is  H\"{o}lder continuous with exponent $\beta\in(0,1).$ Then, for any $g\in \mathcal{B},$   we have
\begin{equation}\label{jkfm5}
\sup\limits_{0\leq t\leq T}|Eg(X_{\tau_{1}}(t))-Eg(X_{\tau_{2}}(t))|\leq C_{T,L}\, |\tau_{1}-\tau_{2}|^{\beta\wedge\frac{1}{2}},
\end{equation}
where $C_{L,T}$ is a positive constant.
\end{cor}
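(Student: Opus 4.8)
The plan is to read off (\ref{jkfm5}) from the quantitative bound (\ref{0olkm3}) of Theorem \ref{them2}, the only task being to estimate the two boundary integrals appearing there by means of the H\"{o}lder continuity of $\varphi$. As throughout this subsection, I assume without loss of generality that $\tau_1<\tau_2$ and put $\delta:=\tau_2-\tau_1=|\tau_1-\tau_2|$; I also fix a constant $H>0$ with $|\varphi(a)-\varphi(b)|\leq H|a-b|^{\beta}$ for all $a,b\leq 0$. If $\delta\geq 1$ the inequality (\ref{jkfm5}) is trivial, since $\|g\|_\infty\leq 1$ forces $|Eg(X_{\tau_{1}}(t))-Eg(X_{\tau_{2}}(t))|\leq 2\leq 2\,\delta^{\beta\wedge\frac12}$; so I would henceforth assume $\delta\leq 1$.

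For the first integral in (\ref{0olkm3}), note that $|(s-\tau_1)-(s-\tau_2)|=\delta$ for every $s$, so $|\varphi(s-\tau_1)-\varphi(s-\tau_2)|^{4}\leq H^{4}\delta^{4\beta}$, and since $[0,t\wedge\tau_1]$ has length at most $t$,
\[
t^{-1}\int_{0}^{t\wedge\tau_1}|\varphi(s-\tau_1)-\varphi(s-\tau_2)|^{4}\,ds\leq H^{4}\delta^{4\beta}.
\]
For the second integral the key point is that whenever $[t\wedge\tau_1,t\wedge\tau_2]$ is nonempty one necessarily has $t>\tau_1$, hence on this interval $\tau_1\leq s\leq\tau_2$ and therefore $s-\tau_2\in[-\delta,0]$, so that $|\varphi(0)-\varphi(s-\tau_2)|^{4}\leq H^{4}\delta^{4\beta}$; moreover the length of $[t\wedge\tau_1,t\wedge\tau_2]$ is at most $\min\{t,\delta\}$. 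Consequently
\[
t^{-1}\int_{t\wedge\tau_1}^{t\wedge\tau_2}|\varphi(0)-\varphi(s-\tau_2)|^{4}\,ds\leq t^{-1}\min\{t,\delta\}\,H^{4}\delta^{4\beta}\leq H^{4}\delta^{4\beta}.
\]

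Inserting these two bounds into (\ref{0olkm3}) gives, for every $t\in(0,T]$,
\[
|Eg(X_{\tau_{1}}(t))-Eg(X_{\tau_{2}}(t))|\leq C_{T,L}\bigl(\delta^{2}+\delta^{4\beta}\bigr)^{1/4}\leq C_{T,L}\bigl(\delta^{1/2}+\delta^{\beta}\bigr)\leq 2C_{T,L}\,\delta^{\beta\wedge\frac12},
\]
where I used the elementary inequality $(a+b)^{1/4}\leq a^{1/4}+b^{1/4}$ and then $\delta\leq 1$. Since the right-hand side is independent of $t$, taking the supremum over $t\in[0,T]$ yields (\ref{jkfm5}).

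I expect the only slightly delicate point to be the uniform control of the singular prefactor $t^{-1}$ as $t\downarrow 0$: one must notice that the two integration windows in (\ref{0olkm3}) automatically shrink at least as fast as $t$ (respectively as fast as $\min\{t,\delta\}$), which is exactly what compensates the $t^{-1}$; beyond this observation the argument is a routine application of the H\"{o}lder continuity of $\varphi$ together with the subadditivity of $x\mapsto x^{1/4}$.
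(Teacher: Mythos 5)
Your argument is correct and is essentially the paper's own proof: both bound the two boundary integrals via the H\"older continuity of $\varphi$ (using that the second integration window is nonempty only when $t\geq\tau_1$, so $\tau_2-s\leq\tau_2-\tau_1$ there) and then absorb $(\delta^2+\delta^{4\beta})^{1/4}$ into $C_{T,L}\,\delta^{\beta\wedge\frac12}$. The only cosmetic difference is that you handle $\delta\geq 1$ explicitly while the paper instead notes separately that the difference vanishes at $t=0$ (which you should also record, since (\ref{0olkm3}) is stated only for $t\in(0,T]$).
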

\begin{proof}We have
$$t^{-1}\int_{t\wedge \tau_1}^{t\wedge \tau_2} (\tau_2-s)^{4\beta}ds\leq t^{-1}\int_{t\wedge \tau_1}^{t\wedge \tau_2} (\tau_2-\tau_1)^{4\beta}ds\leq(\tau_2-\tau_1)^{4\beta}.$$
Hence, it follows from (\ref{0olkm3}) that
\begin{align*}
	&|Eg(X_{\tau_{1}}(t))-Eg(X_{\tau_{2}}(t))|\\
&\leq C_{T,L}\left(|\tau_{1}-\tau_{2}|^{2}+t^{-1}\int_{0}^{t\wedge \tau_1} |\tau_1-\tau_2|^{4\beta}ds
+t^{-1}\int_{t\wedge \tau_1}^{t\wedge \tau_2} (\tau_2-s)^{4\beta}ds\right)^{1/4}\\
&\leq C_{T,L}\, |\tau_{1}-\tau_{2}|^{\beta\wedge\frac{1}{2}},\,\,\,0<t\leq T.
\end{align*}
This, together with the fact that $|Eg(X_{\tau_{1}}(t))-Eg(X_{\tau_{2}}(t))|=0$ when $t=0,$ gives us (\ref{jkfm5}).
\end{proof}
When the coefficients $b(t,x,y),\sigma(t,x,y)$ do not depend on $x,$ the delays $\tau_1=0,\tau_2=1/n$ and the initial data $\varphi(t)=x_0,\,t\in [-1/n,0],$ we obtain the following weak rate of convergence for the Carath\'eodory approximation.
\begin{cor}\label{kgld1} Consider the Carath\'eodory approximation system (\ref{cara01})-(\ref{cara02}). Suppose that

\noindent (i) $b(t,x)$ and $\sigma(t,x)$  are Lipschitz in $x$ and satisfy linear growth,

\noindent (ii) $b(t,\cdot)$ and $\sigma(t,\cdot)$  are twice differentiable with bounded derivatives,

\noindent (iii) $|\sigma(t,x)|\geq \sigma_0>0$ for all $t\in [0,T]$ and $x\in \mathbb{R}.$

\noindent Then, for any $g\in \mathcal{B},$ we have
$$\sup\limits_{0\leq t\leq T}|Eg(x^n(t))-Eg(x(t))|\leq \frac{C_{L,T}}{\sqrt{n}},\,\,\,n\geq 1,$$
where $C_{L,T}$ is a positive constant.
\end{cor}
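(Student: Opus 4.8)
The plan is to specialize Theorem \ref{them2} to the situation described in the corollary. First I would observe that the Carath\'eodory system (\ref{cara01})--(\ref{cara02}) is exactly the pair of delay SDEs (\ref{cara04})--(\ref{cara05}) with the choices $\tau_1=0$, $\tau_2=1/n$, the constant initial data $\varphi(t)\equiv x_0$ on $(-\infty,0]$, and coefficients $b(t,x,y)=b(t,y)$, $\sigma(t,x,y)=\sigma(t,y)$ not depending on the second slot. I would then check that the hypotheses (i)--(iii) of the corollary translate into Assumptions \ref{assum1}, \ref{assum5} and the uniform ellipticity condition $|\sigma(t,x,y)|\ge\sigma_0>0$ required in Theorem \ref{them2}: the Lipschitz/linear-growth condition (i) gives Assumption \ref{assum1}, the twice-differentiability with bounded derivatives (ii) gives Assumption \ref{assum5}, and (iii) is precisely the extra nondegeneracy hypothesis. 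Note that since $b$ and $\sigma$ do not depend on $x$, they are trivially $\mathcal{C}^\infty$ in that variable, so Assumption \ref{assum5} holds without extra work.

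Next I would apply the conclusion (\ref{0olkm3}) of Theorem \ref{them2} with these parameters. Because $\varphi$ is the constant $x_0$, both boundary integrals vanish identically: $\varphi(s-\tau_1)-\varphi(s-\tau_2)=x_0-x_0=0$ and $\varphi(0)-\varphi(s-\tau_2)=x_0-x_0=0$. Hence (\ref{0olkm3}) collapses to
\begin{align*}
|Eg(x(t))-Eg(x^n(t))|\le C_{T,L}\bigl(|\tau_1-\tau_2|^2\bigr)^{1/4}=C_{T,L}\,|\tau_2-\tau_1|^{1/2}=\frac{C_{T,L}}{\sqrt{n}}
\end{align*}
for every $t\in(0,T]$, where I have used $\tau_2-\tau_1=1/n$. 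For $t=0$ both sides are $0$ since $x(0)=x^n(0)=x_0$, so the bound is trivially valid there as well. Taking the supremum over $t\in[0,T]$ yields the claimed estimate.

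The main (and essentially only) point requiring care is the verification that the degenerate/specialized setting genuinely falls under the umbrella of Theorem \ref{them2}: one must make sure that having coefficients independent of the first spatial argument does not conflict with any of the assumptions (it does not — it only makes them easier to satisfy) and that the initial condition being constant on all of $(-\infty,0]$, rather than merely on $[-\tau_2,0]$, is harmless (it is, since only the values on $[-\tau,0]$ enter equation (\ref{eq2})). Once this bookkeeping is done, the corollary is an immediate substitution into the already-proven estimate (\ref{0olkm3}), with the vanishing of the $\varphi$-integrals doing all the work of reducing the bound to the clean rate $n^{-1/2}$.
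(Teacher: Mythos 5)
Your proposal is correct and is exactly the argument the paper intends (the paper states the corollary as an immediate specialization of Theorem \ref{them2} with $\tau_1=0$, $\tau_2=1/n$, constant $\varphi\equiv x_0$, and coefficients independent of the current-state variable, so that both $\varphi$-integrals in (\ref{0olkm3}) vanish and the bound reduces to $C|\tau_1-\tau_2|^{1/2}=C n^{-1/2}$). Only a cosmetic remark: you describe the coefficients as "not depending on the second slot" while writing $b(t,x,y)=b(t,y)$; to match the scheme (\ref{cara02}), where only the delayed state enters, it is the current-state argument $x$ that must be absent, which is indeed what your formula says.
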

\section{Conclusion}
In this paper, we employed the technique of Malliavin calculus to study the weak convergence of delay SDEs. The interesting point of our results lies in the fact that we are able to provide an explicit estimate for the rate of convergence and we only require the test function $g$ to be bounded. Furthermore, the method introduced in the paper can be used to investigate the weak convergence and the Carath\'eodory approximation for more general equations such as SDEs with multiple delays
$$dX(t)= b(t,X(t),X(t-\tau_1),\cdots,X(t-\tau_k))dt+\sigma(t,X(t),X(t-\tau_1),\cdots,X(t-\tau_k))dB(t),\,\,\,t\in [0,T]$$
or  SDEs with variable delays
$$dX(t)= b(t,X(t),X(t-\tau(t)))dt+\sigma(t,X(t),X(t-\tau(t)))dB(t),\,\,\,t\in [0,T].$$
However, the computations will be more complex. Hence, in the present paper, we have chosen the simplest class of delay SDEs to illustrate the main features of the method rather than getting bogged down with complex notations.

\noindent {\bf Acknowledgments.} The authors would like to thank the anonymous referees for their valuable comments for improving the paper. N. T. Dung and H. T. P. Thao are supported by the Vietnam National University, Hanoi under grant number QG.20.21.  T. C. Son, N. V. Tan, T. M. Cuong and P. D. Tung are supported by Viet Nam National Foundation for Science and Technology Development (NAFOSTED) under grant number 101.03-2019.08. A part of this paper was done while the authors were visiting the Vietnam Institute for Advanced Study in Mathematics (VIASM). The authors would like to thank the VIASM for financial support and hospitality.

\end{document}